\newtheorem{theorem}{Theorem}[section]
\newtheorem{prop}[theorem]{Proposition}
\newenvironment{proof}{\noindent {\em Proof  }}{\hspace*{1cm}\hspace*{\fill}$\rule{1.2ex}{1.4ex}$\vspace{.15cm}}
\begin{document}
\title{\bf Well-posedness of the Fifth Order
Kadomtsev-Petviashvili I Equation in Anisotropic Sobolev Spaces with
Nonnegative Indices\footnote{This project was completed when the
first-named author visited Memorial University of Newfoundland under
the financial support from the NNSF of China No.10626008 as well as
the second-named author's NSERC (Canada) grant and Dean of Science
(MUN, Canada) Start-up fund.}}

\author{Junfeng Li\\
{\it School of Mathematical Sciences}\\
{\it Laboratory of Math and Complex Systems, Ministry of
Education}\\
{\it Beijing Normal University, Beijing 100875, P. R. China}\\
{\it Email: junfli@yahoo.com.cn}\\
\hspace{0.5cm}\\
Jie Xiao\\
{\it Department of Mathematics and Statistics}\\
{\it Memorial University of Newfoundland, St John's, NL AIC 5S7, Canada}\\
{\it Email: jxiao@math.mun.ca}}

\date{}

\maketitle

\begin{abstract} In this paper we establish the local and
global well-posedness of the real valued fifth order
Kadomstev-Petviashvili I equation in the anisotropic Sobolev spaces
with nonnegative indices. In particular, our local well-posedness
improves Saut-Tzvetkov's one and our global well-posedness gives an
affirmative answer to Saut-Tzvetkov's $L^2$-data conjecture.
\end{abstract}
\medskip
\noindent{\bf Key Words:}\ Fifth KP-I equation, anisotropic Sobolev space, Bourgain space, dyadic decomposed Strichartz estimate, smoothing effect.
\smallskip

\noindent {\bf 2000 Mathematics Subject Classification:}\ 35Q53, 35G25.

\section{Introduction}

In their J. Math. Pures Appl. (2000) paper on the initial value
problem (IVP) of the real valued fifth order Kadomtsev-Petviashvili
I (KP-I) equation (for $(\alpha,t,x,y)\in\mathbb R^4$):
\begin{eqnarray}\label{5KP}
\left\{\begin{array}{ll}
\partial_tu+\alpha\partial_x^3u+\partial_x^5u+\partial_x^{-1}\partial_y^2u+u\partial_xu=0, \\
u(0,x,y)=\phi(x,y),
\end{array}
\right.
\end{eqnarray}
J.C. Saut and N. Tzvzetkov obtained the following result (cf. \cite[Theorems 1 \& 2]{SauTzv2}):

\medskip

\noindent{\bf Saut-Tzvzetkov's Theorem}\, {\it {\rm(i)} The IVP
(\ref{5KP}) is locally well-posed for initial data $\phi$ satisfying
\begin{equation}\label{eq1new}
\|\phi\|_{L^2(\mathbb
R^2)}+\big\||-i\partial_x|^s\phi\big\|_{L^2(\mathbb
R^2)}+\big\||-i\partial_y|^k\phi\big\|_{L^2(\mathbb R^2)}<\infty\
\hbox{with}\ s-1,\ k\ge 0;\ \
\frac{\hat{\phi}(\xi,\eta)}{|\xi|}\in\mathcal{S}'(\mathbb R^2).
\end{equation}

{\rm (ii)} The IVP (\ref{5KP}) is globaly well-posed for initial
data $\phi$ satisfying
\begin{equation}\label{eq2new}
\|\phi\|_{L^2(\mathbb R^2)}<\infty;\quad
\frac12\int_{\mathbb R^2}
|\partial^2_x\phi|^2+\frac{\alpha}{2}\int_{\mathbb R^2}
|\partial_x\phi|^2+\frac12\int_{\mathbb R^2}
|\partial^{-1}_x\partial_y\phi|^2-\frac16\int_{\mathbb
R^2}\phi^3<\infty.
\end{equation}
Here and henceforth, $|-i\partial_x|^s$ and  $|-i\partial_y|^s$ are
defined via the Fourier transform:
$$
\widehat{|-i\partial_x|^s\phi}(\xi,\eta)=|\xi|^s\hat{\phi}(\xi,\eta)\quad\hbox{and}\quad
\widehat{|-i\partial_y|^s\phi}(\xi,\eta)=|\eta|^s\hat{\phi}(\xi,\eta).
$$
}

Since they simultaneously found in \cite[Theorem 3]{SauTzv2} that
the condition
\begin{equation}\label{eq3new}
\|\phi\|_{L^2(\mathbb R^2)}<\infty;\quad
|\xi|^{-1}\hat{\phi}(\xi,\eta)\in\mathcal{S}'(\mathbb R^2)
\end{equation}
ensures the gobal well-posedness for the real valued fifth order Kadomtsev-Petviashvili II (KP-II) equation (for $(\alpha,t,x,y)\in\mathbb R^4$):
\begin{eqnarray}\label{5KP2}
\left\{\begin{array}{ll}
\partial_tu+\alpha\partial_x^3u-\partial_x^5u+\partial_x^{-1}\partial_y^2u+u\partial_xu=0, \\
u(0,x,y)=\phi(x,y),
\end{array}
\right.
\end{eqnarray}
they made immediately a conjecture in \cite[Remarks, p. 310]{SauTzv2} which is now reformulated in the following form:
\medskip

\noindent{\bf Saut-Tzvzetkov's $L^2$-data Conjecture}\, {\it The IVP
(\ref{5KP}) is globally well-posed for initial data $\phi$
satisfying (\ref{eq3new})}.
\medskip

In the above and below, as ``local well-posedness" we refer to finding
a Banach space $(X,\|\cdot\|_X)$ -- when the initial data $\phi\in
X$ there exists a time $T$ depending on $\|\phi\|_X$ such that
(\ref{5KP}) has a unique solution $u$ in $C([-T,T];X)\cap Y$ (where
$Y$ is one of the Bourgain spaces defined in Section 2) and $u$
depends continuously on $\phi$ (in some reasonable topology). If
this existing time $T$ can be extended to the positive infinity,
then ``local well-posedness" is said to be ``global well-posedness". Of
course, the choice of a Banach space relies upon the
boundedness of the fundamental solution to the corresponding
homogenous equation or the conservation law for equation itself.

In our current paper, we settle this conjecture through improving the above-cited Saut-Tzvzetkov's theorem.
More precisely, we have the following:

\begin{theorem}\label{mainth} The IVP (\ref{5KP}) is not only locally but also globally well-posed for initial data $\phi$ satisfying
\begin{equation}\label{eq4new}
\phi\in H^{s_1,s_2}(\mathbb R^2)\ \hbox{with}\ s_1,s_2\ge 0;\quad
|\xi|^{-1}\hat{\phi}(\xi,\eta)\in\mathcal{S}'(\mathbb R^2).
\end{equation}
\end{theorem}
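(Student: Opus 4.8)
The plan is to run the Fourier restriction norm (Bourgain space) method adapted to the fifth-order KP-I dispersion. Writing the linear symbol as $p(\xi,\eta)=\xi^5-\alpha\xi^3+\eta^2\xi^{-1}$, so that the free evolution is $\widehat{e^{tL}\phi}(\xi,\eta)=e^{itp(\xi,\eta)}\hat\phi(\xi,\eta)$, I would define the anisotropic Bourgain space $X^{s_1,s_2,b}$ by the norm
$$\|u\|_{X^{s_1,s_2,b}}=\big\|\langle\xi\rangle^{s_1}\langle\eta\rangle^{s_2}\langle\tau-p(\xi,\eta)\rangle^{b}\,\tilde u(\tau,\xi,\eta)\big\|_{L^2_{\tau,\xi,\eta}},$$
where $\tilde u$ is the space-time Fourier transform, together with its time-localized version $X^{s_1,s_2,b}_T$. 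Recasting (\ref{5KP}) in Duhamel form
$$u(t)=\psi(t)e^{tL}\phi-\tfrac12\,\psi(t)\int_0^t e^{(t-t')L}\partial_x\big(u(t')^2\big)\,dt',$$
with $\psi$ a smooth time cutoff, local well-posedness follows from the contraction mapping principle once the standard linear estimates in $X^{s_1,s_2,b}$ (the free-evolution and Duhamel bounds, each costing a small power of $T$ for $b$ slightly above $1/2$) are combined with the crucial bilinear estimate
$$\big\|\partial_x(uv)\big\|_{X^{s_1,s_2,b-1}}\lesssim\|u\|_{X^{s_1,s_2,b}}\,\|v\|_{X^{s_1,s_2,b}}.$$
The condition $|\xi|^{-1}\hat\phi\in\mathcal{S}'(\mathbb R^2)$ is what allows the operator $\partial_x^{-1}$ in the equation and the factor $\partial_x$ in the nonlinearity to be reconciled at low $x$-frequency, so I would check that this hypothesis is preserved along the iteration.

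The heart of the argument is the bilinear estimate, and its proof rests on the resonance function
$$H(\xi_1,\eta_1,\xi_2,\eta_2)=p(\xi,\eta)-p(\xi_1,\eta_1)-p(\xi_2,\eta_2),\qquad \xi=\xi_1+\xi_2,\ \eta=\eta_1+\eta_2.$$
Here the two dispersive contributions factor explicitly: the profile term satisfies $\xi^5-\xi_1^5-\xi_2^5=5\xi_1\xi_2\xi(\xi_1^2+\xi_1\xi_2+\xi_2^2)$ (modulo the lower-order $\alpha$-term), while the KP nonlocality yields the identity
$$\frac{\eta^2}{\xi}-\frac{\eta_1^2}{\xi_1}-\frac{\eta_2^2}{\xi_2}=-\frac{(\xi_1\eta_2-\xi_2\eta_1)^2}{\xi_1\xi_2\xi}.$$
I would then run a Littlewood--Paley decomposition simultaneously in the $x$-frequency, the $y$-frequency, and the three modulations $\langle\tau_j-p(\xi_j,\eta_j)\rangle$, and use the algebraic relation that the sum of the three modulations equals $-H$, so that the largest of them controls $|H|$. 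On each dyadic block the estimate is closed by interpolating the dyadically decomposed Strichartz estimates for the fifth-order KP-I group against the smoothing gain coming from the factor $\xi_1\xi_2\xi$ in $H$; the $\partial_x$ in the nonlinearity is absorbed precisely by this smoothing. This is exactly the point where the strong fifth-order dispersion rescues KP-I, whose third-order analogue fails.

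The main obstacle I anticipate is the resonant region where $|H|$ is small---this happens when the fifth-order and the nonlocal contributions in $H$ nearly cancel, forcing $(\xi_1\eta_2-\xi_2\eta_1)^2\sim \xi_1^2\xi_2^2\xi^2(\xi_1^2+\xi_1\xi_2+\xi_2^2)$---together with the companion low $x$-frequency region where one of $\xi_1,\xi_2,\xi$ is small and the singular factor $(\xi_1\xi_2\xi)^{-1}$ threatens the estimate. In both regimes the naive modulation gain vanishes, and one must instead exploit transversality: the measure of the resonant set in the $y$-frequencies is small, which I would quantify by an $L^2$ bilinear/Strichartz estimate that trades the lost modulation for integrability in the $y$-variable. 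Once the bilinear estimate is established at the endpoint $s_1=s_2=0$, local well-posedness holds for $L^2$ data, and global well-posedness in the $L^2$ setting---hence the Saut--Tzvetkov conjecture---follows from the conservation of the $L^2$ norm, $\|u(t)\|_{L^2}=\|\phi\|_{L^2}$, which bounds the iteration time uniformly and lets the local solution be continued to all $t$. For $s_1,s_2>0$ the same mass conservation, supplemented by persistence of the anisotropic regularity along the flow, upgrades the global statement to all nonnegative indices.
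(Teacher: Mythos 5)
Your plan follows essentially the same route as the paper: anisotropic Bourgain spaces $X^{s_1,s_2}_b$, contraction via the linear estimates plus the bilinear bound $\|\partial_x(uv)\|_{X^{s_1,s_2}_{-1/2+}}\lesssim\|u\|_{X^{s_1,s_2}_{1/2+}}\|v\|_{X^{s_1,s_2}_{1/2+}}$, dyadically decomposed Strichartz estimates for the nonresonant interactions, and globalization from $L^2$ conservation together with persistence of the $H^{s_1,0}$ and then $H^{s_1,s_2}$ norms on a time interval whose length depends only on $\|\phi\|_{L^2(\mathbb R^2)}$ (the paper realizes ``persistence'' through the commuted bilinear estimates with $J^{s_1}_x$ and $J^{s_2}_y$). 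Your resonance identity is correct, and your diagnosis that the entire difficulty sits in the high--high interaction where the resonance function is small is exactly the paper's point of departure from Saut--Tzvetkov.

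The one genuine gap is at that crux. You propose to handle the resonant region by a single transversality argument in the $y$-frequency, i.e.\ by showing that the resonant set has small measure in $\mu_2$ for fixed remaining variables. That is the paper's Subsubcase (i), and it rests on the change of variables $(\xi_1,\mu_1,\xi_2,\mu_2)\mapsto(\xi,\mu,w,\mu_2)$ whose Jacobian is $J_\mu=5(\xi_1^4-\xi_2^4)-3\alpha(\xi_1^2-\xi_2^2)-\big[(\mu_1/\xi_1)^2-(\mu_2/\xi_2)^2\big]$. This quantity can be as small as the modulation scale $2^{j}$ (indeed it can vanish) on a nontrivial portion of the resonant set, and there the argument you describe gives nothing. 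The paper's remedy is a second, complementary change of variables that keeps $\xi_1$ free instead of $\mu_2$, with Jacobian $J_\xi=2(\mu_1/\xi_1-\mu_2/\xi_2)$; the key observation is that the resonance condition itself forces $|\mu_1/\xi_1-\mu_2/\xi_2|$ to be large (of order $|\xi_1+\xi_2|\,|\xi_1|$, in particular $|J_\xi|\gtrsim|\xi_1|$), so $J_\xi$ is nondegenerate precisely where $J_\mu$ may degenerate, and the measure of the resonant set in $\xi_1$ is then $O(2^{j}|\xi_1|^{-3})$ because $|\partial_{\xi}h(\xi_1)|\gtrsim|\xi_1|^3$ for the relevant level function $h$. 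Without this second case (or some equivalent dichotomy) the bilinear estimate does not close; everything else in your outline matches the paper's proof.
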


Here and henceafter, the symbol
\begin{equation*}
H^{s_1,s_2}(\mathbb{R}^2)=\Big\{f\in
\mathcal{S}^\prime(\mathbb{R}^2):\,\|f\|_{H^{s_1,s_2}(\mathbb{R}^2)}=\big\|(1+|\xi|^2)^{\frac{s_1}{2}}(1+|\eta|^2)^{\frac{s_2}{2}}\hat{f}(\xi,\eta)\big\|_{L^2(\mathbb
R^2)}<\infty\Big\}
\end{equation*}
stands for the anisotropic Sobolev space with nonnegative indices
$s_1,s_2\in [0,\infty)$. Obviously, if $s_1=s_2=0$ then
$H^{s_1,s_2}(\mathbb R^2)=L^2(\mathbb R^2)$ and hence (\ref{eq4new})
goes back to (\ref{eq3new}) which may be regarded as the appropriate
constraint on the initial data $\phi$ deriving the global
well-posednedness of the IVP for the fifth order KP-I equation. And
yet the understanding of Theorem \ref{mainth} is not deep enough
without making three more observations below:

\medskip

\noindent$\bullet$\quad {\bf Observation 1}\quad The classification
of the fifth order KP equations is determined by the dispersive
function:
\begin{equation}\label{KPsign}
\omega(\xi,\,\mu)=\pm\xi^5-\alpha\xi^3+\frac{\mu^2}{\xi},
\end{equation}
where the signs $\pm$ in (\ref{KPsign}) produce the fifth order KP-I
and KP-II equations respectively. The forthcoming estimates play an
important role in the analysis of the fifth order KP equations --
for the fifth order KP-I equation, we have
\begin{equation}\label{5KP-1sign}
|\xi|^2>|\alpha|\Rightarrow|\nabla\omega(\xi,\mu)|=\Big|\Big(5\xi^4+3\alpha\xi^2-\frac{\mu^2}{\xi^2},\,2\frac{\mu}{\xi}\Big)\Big|\gtrsim
|\xi|^2;
\end{equation}
and for the fifth order KP-II equation, we have
\begin{equation}\label{5KP-2sign}
 |\xi|^2>|\alpha|\Rightarrow |\nabla\omega(\xi,\mu)|=\Big|\Big(5\xi^4+3\alpha\xi^2+\frac{\mu^2}{\xi^2},\,2\frac{\mu}{\xi}\Big)\Big|\gtrsim
|\xi|^4.
\end{equation}
By (\ref{5KP-2sign}), we can get more smooth effect estimates than
by $(\ref{5KP-1sign})$. These imply that we can get a well-posedness (in other words, a lower regularity) for the fifth order KP-II equation better than that for the fifth order
KP-I equation. Another crucial concept is the resonance function:
\begin{equation}\label{5KP-Iresonance}
\begin{split}
&R(\xi_1,\xi_2,\mu_1,\mu_2)\\
&=\omega(\xi_1+\xi_2,\mu_1+\mu_2)-\omega(\xi_1,\mu_1)-\omega(\xi_2,\mu_2)\\
&=\frac{\xi_1\xi_2}{(\xi_1+\xi_2)}\left((\xi_1+\xi_2)^2\Big[5(\xi_1^2+\xi_1\xi_2+\xi_2^2)-3\alpha\Big]\mp
\Big(\frac{\mu_1}{\xi_1}-\frac{\mu_2}{\xi_2}\Big)^2\right).
\end{split}
\end{equation}
Evidently, the fifth order KP-II equation (corresponding to ``+" in
(\ref{5KP-Iresonance})) always enjoys
\begin{equation}\label{eqlast}
|R(\xi_1,\xi_2,\mu_1,\mu_2)|\gtrsim \big(\max\{|\xi_1|,|\xi_2|,|\xi_1+\xi_2|\}\big)^4\min\{|\xi_1|,|\xi_2|,|\xi_1+\xi_2|\}.
\end{equation}
Nevertheless, this last inequality (\ref{eqlast}) is no longer true for the fifth order KP-I equation.

In the foregoing and following the notation $A\lesssim B$ (i.e.,
$B\gtrsim A$) means: there exists a constant $C>0$ independent of
$A$ and $B$ such that $A\leq CB.$ In addition, if there
exist two positive constants $c$ and $C$ such that $10^{-3}<c<C<10^3$ and $cA\leq
B\leq CB$ then the notation $A\sim B$ will be used.

\medskip

\noindent$\bullet$\quad{\bf Observation 2}\quad Perhaps it worths
pointing out that the well-posedness of the fifth order KP-II
equation is relatively easier to establish but also its result is
much better than that of the fifth order KP-I equation. Although the
study of the well-posedness for the fifth order KP-II equation
(without the third order partial derivative term) usually focuses on
the critical cases (which means $s_1+2s_2=-2$ by a scaling
argument), in \cite{SauTzv1} Saut and Tzvetkov only obtained the
local well-posedness for the fifth order KP-II equation in the
anisotropic Sobolev space $H^{s_1,s_2}(\mathbb R^2)$ with
$s_1>-\frac{1}{4},s_2\geq 0$ with a modification of the low
frequency, and furthermore in \cite{SauTzv2} they removed this
modification and obtained the global well-posedness in $L^2(\mathbb
R^2)$. On the other hand, in \cite{IsLM} Isaza-L\'opez-Mej\'ia
established the local well-posedness for $H^{s_1,s_2}(\mathbb R^2)$
with $s_1>-\frac{5}{4},s_2\geq0$ and the global well-posedness for
$H^{s_1,s_2}(\mathbb R^2)$ with $s_1>-\frac{4}{7},s_2\geq0$. More
recently, Hadac \cite{Had} also gained the same local well-posedness
in a broader context. Meanwhile in the fifth order KP-I equation
case, the attention is mainly paid on those spaces possessing
conservation law such as $L^2(\mathbb R^2)$ and the energy space
$$
E^1(\mathbb R^2)=\Big\{f\in L^2(\mathbb R^2):\quad
\big\|(1+|\xi|^2+|\xi|^{-1}|\mu|)\hat{f}(\xi,\mu)\big\|_{L^2(\mathbb
R^2)}<\infty\Big\}.
$$
To obtain the local well-posedness of KP-I in $E^1(\mathbb R^2)$, in
\cite{SauTzv2}, besides the above-mentioned results Saut and Tzvekov
also got the local well-posedness in $\tilde{H}^{s,k}(\mathbb R^2)$
with $s-1,k\ge 0$.
$$
\tilde{H}^{s,k}(\mathbb R^2)=\Big\{f\in
L^2(\mathbb{R}^2):\quad\big\|(1+|\xi|^s+|\xi|^{-1}|\eta|^k)\hat{f}(\xi,\eta)\big\|_{L^2(\mathbb
R^2)}<\infty\Big\}.
$$ For the energy case $\tilde{H}^{2,1}(\mathbb R^2)=E^1(\mathbb R^2)$, they obtained the global well-posedness of (\ref{5KP}). In \cite{IoKe},
Ionescu and Kenig got the global well-posedness for the fifth order
periodic KP-I equation (without the third order dispersive term) in
the standard energy space $E^1(\mathbb R^2)$. Recently, in
\cite{CLM1} Chen-Li-Miao obtained the local well-posedness in
$$
E^s(\mathbb R^2)=\Big\{f\in L^2(\mathbb R^2):\quad
\|(1+|\xi|^2+|\xi|^{-1}|\mu|)^s\hat{f}(\xi,\mu)\|_{L^2(\mathbb
R^2)}<\infty\Big\},\quad 0<s\leq1.
$$

\medskip

\noindent$\bullet$\quad{\bf Observation 3}\quad The well-posedness
for the IVP of the third order KP equations in $\mathbb R^3$:
\begin{eqnarray}\label{5KP3}
\left\{\begin{array}{ll}
\partial_tu\mp\partial_x^3u+\partial_x^{-1}\partial_y^2u+u\partial_xu=0, \\
u(0,x,y)=\phi(x,y),
\end{array}
\right.
\end{eqnarray}
in which the sign $\mp$ give the third order KP-I and KP-II
equations respectively, is an important background material of the
investigation of the well-posedness for the fifth order KP
equations. Molinet, Saut and Tzvetkov showed in
\cite{MoSaTz1,MoSaTz2} that, for the third order KP-I equation one
cannot obtain the local well-posedness in any type of nonisotropic
$L^2$-based Sobolev space or in the energy space using Picard's
iteration -- see also \cite{KoTz}; while I\'orio and Nunes
\cite{IoNu} applied a compactness method to deduce the local
well-posednes for the third KP-I equation with data being in the
normal Sobolev space $H^s(\mathbb{R}^2),\,s>2$ and obeying a
``zero-mass" condition. On the other hand, the global well-posedness
for the third order KP-I equation was discussed by using the
classical energy method in \cite{Ke} where Kenig established the
global well-posedness in
$$
\Big\{f\in L^2(\mathbb{R}^2):\quad\|f\|_{L^2(\mathbb
R^2)}+\|\partial_x^{-1}\partial_y f\|_{L^2(\mathbb R^2)}
+\|\partial_x^2 f\|_{L^2(\mathbb
R^2)}+\|\partial_x^{-2}\partial_y^2f\|_{L^2(\mathbb
R^2)}<\infty\Big\}.
$$
As far as we know, the best well-posed result on the third order
KP-I equation is due to Ionescu, Kenig and Tataru \cite{IoKeT} which
gives the global well-posedness for the third order KP-I equation in
the energy space
$$
\Big\{f\in L^2(\mathbb R^2):\quad\|f\|_{L^2\mathbb
R^2}+\|\partial_x^{-1}\partial_yf\|_{L^2(\mathbb
R^2)}+\|\partial_xf\|_{L^2(\mathbb{R}^2)}<\infty\Big\}.
$$
Relatively speaking, the results on the third order KP-II equation
are nearly perfect. In \cite{Bou}, Bourgain proved the global
well-posedness of the third order KP-II equation in $L^2(\mathbb
R^2)$ -- the assertion was then extended by Takaoka and Tzvetkov
\cite{TakTzv} and Isaza-Mej\'ia \cite{IsMe} from $L^2(\mathbb R^2)$
to $H^{s_1,s_2}(\mathbb R^2)$ with $s_1>-\frac{1}{3},\, s_2\geq 0$.
In \cite{Tak}, Takaoka obtained the local well-posedness for the
third order KP-II equation in $H^{s_1,s_2}(\mathbb R^2)$ with
$s_1>-\frac{1}{2},\,s_2=0$ under an additional low frequency
condition $|-i\partial_x|^{-\frac{1}{2}+\varepsilon}\phi\in
L^2(\mathbb R^2)$ which was removed successfully in Hadac's recent
paper \cite{Had}. These results are very close to the critical index
$s_1+2s_2=-\frac{1}{2}$ which follows from the scaling argument.

The rest of this paper is devoted to an argument for Theorem
\ref{mainth}. In Section 2 we collect some useful and basically known linear estimates
for the fifth order KP-I equation. In Section 3 we present the
necessary and crucial bilinear estimates in order to set up the local
(and hence global) well-posedness -- this part is partially motivated by \cite{SauTzv2} though --
the main difference between their treatment and ours is how to
dispose the ``high-high interaction" -- their method exhausts no
geometric structure of the resonant set of the fifth order KP-I
equation while ours does fairly enough. In Section 4 we complete the
argument through applying the facts verified in Sections 2 and 3 and
Picard's iteration principle to the integral equation corresponding
to (\ref{5KP}).

\section{Linear Estimates}
We begin with the IVP of linear fifth order KP-I equation:
\begin{eqnarray}\label{L5KP}
\left\{\begin{array}{ll}
\partial_tu+\alpha\partial_x^3u+\partial_x^5u+\partial_x^{-1}\partial_y^2u=0, \\
u(0,x,y)=\phi(x,y).
\end{array}\right.
\end{eqnarray}
By the Fourier transform $\widehat{(\cdot)}$, the solution of
(\ref{L5KP}) can be defined as
$$
u(t)(x,y)=\big(S(t)\phi\big)(x,y)=\int_{\mathbb{R}^2}e^{i(x\xi+y\mu+t\omega(\xi,\mu))}\widehat{\phi}(\xi,\mu)d\xi
d\mu.$$ By Duhamel's formula, (\ref{5KP}) can be reduced to the
integral representation below:
\begin{eqnarray}\label{eq:1}
u(t)=S(t)\phi-\frac 12 \int_0^tS(t-t')\partial_x(u^2(t'))dt'.
\end{eqnarray}
So, in order to get the locall well-posedenss we will apply a Picard
fixed point argument in a suitable function space to the following
integral equation:
\begin{eqnarray}\label{eq:2}
u(t)=\psi(t)S(t)\phi-\frac{\psi_T(t)}{2}
\int_0^tS(t-t')\partial_x(u^2(t'))dt',
\end{eqnarray}
where $t$ belongs to ${\Bbb R}$, $\psi$ is a time cut-off function
satisfying
$$
\psi\in C_0^\infty({\Bbb R});\ \ {\rm supp}\,\psi\subset [-2,\,2];\ \
\psi=1\ \ {\rm on}\ \ [-1,\,1],
$$
and $\psi_T(\cdot)$ represents $\psi(\cdot/T)$ for a given time
$T\in (0,1)$. Consequently, we need to define an appropriate
Bourgain type space, which is associated with the fifth order KP-I
equation. To this end, for $s_1,s_2\ge 0$ and $b\in \mathbb R$ the
notation $X_b^{s_1,s_2}$ is used as the Bourgain space with norm:

$$
\|u\|_{X^{s_1,s_2}_b}=\|<\tau-\omega(\xi,\mu)>^b<\xi>^{s_1}<\mu>^{s_2}\hat{u}(\tau,\xi,\mu)\|_{L^2(\mathbb
R^3)},
$$
where $<\cdot>$ stands for $(1+|\cdot|^2)^{\frac{1}{2}}\sim
1+|\cdot|$. Furthermore, for an interval $I\subset\mathbb{R}$ the
localized Bourgain space $X^{s_1,s_2}_b(I)$ can be defined via
requiring
$$
\|u\|_{X^{s_1,s_2}_b(I)}=\inf_{w\in
X^{s_1,s_2}_b}\big\{\|w\|_{X^{s_1,s_2}_b}:\quad w(t)=u(t)\quad \text{on\quad
interval}\quad I\big\}.
$$

The following two results are known.

\begin{prop}\cite{SauTzv2}\label{prop1} If
$$
T\in (0,\infty);\, s_1,s_2\geq 0;\,\, -\frac{1}{2}<b'\leq 0\leq
b\leq b'+1,
$$
then
\begin{equation}\label{homo}
\|\psi
S(t)\phi\|_{X^{s_1,s_2}_b}\lesssim\|\phi\|_{H^{s_1,s_2}(\mathbb
R^2)}.
\end{equation}
\begin{equation}\label{inhomo}
\Big\|\psi
(t/T)\int_0^tS(t-t')h(t')dt'\Big\|_{X^{s_1,s_2}_b}\lesssim
T^{1-b+b'}\|h\|_{X^{s_1,s_2}_{b'}}.
\end{equation}
for any $\|h\|_{X^{s_1,s_2}_{b'}}<\infty.$
\end{prop}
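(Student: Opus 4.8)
The plan is to prove both bounds by space--time Fourier analysis, disposing of (\ref{homo}) by a direct computation and reducing (\ref{inhomo}) to a one--dimensional estimate for the Duhamel antiderivative. For (\ref{homo}), I would first record that $\widehat{S(t)\phi}(\xi,\mu)=e^{it\omega(\xi,\mu)}\hat\phi(\xi,\mu)$ in the spatial variables, so the full space--time Fourier transform of $\psi(t)S(t)\phi$ is $\hat\psi(\tau-\omega(\xi,\mu))\hat\phi(\xi,\mu)$. Inserting this into the definition of the $X^{s_1,s_2}_b$--norm, the spatial weights $\langle\xi\rangle^{s_1}\langle\mu\rangle^{s_2}$ and $\hat\phi$ pull out of the $\tau$--integral, and the substitution $\sigma=\tau-\omega(\xi,\mu)$ reduces the inner integral to $\int_{\mathbb R}\langle\sigma\rangle^{2b}|\hat\psi(\sigma)|^2\,d\sigma$, which is finite because $\psi$ is a Schwartz function. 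What remains is exactly $\|\phi\|_{H^{s_1,s_2}(\mathbb R^2)}^2$ up to the constant $\|\psi\|_{H^b}^2$, giving (\ref{homo}).

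For (\ref{inhomo}) I would make two reductions. Since the weights $\langle\xi\rangle^{s_1}\langle\mu\rangle^{s_2}$ involve only the spatial frequencies and commute with $S(t)$, with the time integral, and with multiplication by $\psi_T$, it suffices to treat $s_1=s_2=0$. The resulting operator decouples in $(\xi,\mu)$, so fixing these and writing $\omega=\omega(\xi,\mu)$, the conjugation $g(t)=e^{-it\omega}h(t)$ (equivalently the translation $\tau\mapsto\tau+\omega$ of the modulation variable) removes $\omega$, and the claim follows from the $\omega$--independent one--dimensional estimate
\[
\Big\|\psi_T(t)\int_0^t g(t')\,dt'\Big\|_{H^b_t}\lesssim T^{1-b+b'}\|g\|_{H^{b'}_t}
\]
after integrating the squares against the spatial weights. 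To prove this, I would expand $g(t')=\int e^{it'\tau}\hat g(\tau)\,d\tau$, so that $\int_0^tg=\int\frac{e^{it\tau}-1}{i\tau}\hat g(\tau)\,d\tau$, and split the $\tau$--integral at $|\tau|=1$. On $\{|\tau|>1\}$ the factor $1/\tau$ is integrable against the weight, and after separating $e^{it\tau}-1$ into its two terms one piece is $\psi_T$ times the inverse transform of $\hat g(\tau)/(i\tau)$, controlled using $\langle\tau\rangle^{b-1}\le\langle\tau\rangle^{b'}$, which is where $b\le b'+1$ enters, while the other is a constant multiple of $\psi_T$, a homogeneous profile treated as in (\ref{homo}). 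On $\{|\tau|\le1\}$ I would Taylor--expand $\frac{e^{it\tau}-1}{i\tau}=\sum_{k\ge1}\frac{(it)^k\tau^{k-1}}{k!}$, bound each coefficient $\int_{|\tau|\le1}\tau^{k-1}\hat g(\tau)\,d\tau$ by $\|g\|_{H^{b'}}$ via Cauchy--Schwarz (here $b'>-\tfrac12$ secures integrability of $\langle\tau\rangle^{-2b'}$ near the origin), and combine with $\|\psi_T(t)\,t^k\|_{H^b}$ to sum the series.

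The main obstacle is the bookkeeping of the powers of $T$: each of the three contributions must be shown to carry exactly the factor $T^{1-b+b'}$, which arises from the single power of $T$ produced by the time integration together with the regularity transfer $H^{b'}\to H^b$ across the rescaled cutoff $\psi_T$. The hypotheses are used precisely to close this accounting---$b'>-\tfrac12$ for convergence of the low--frequency $\tau$--integrals, $b'\le 0\le b$ to fix the ordering of the Sobolev weights, and $b\le b'+1$ to keep the exponent $1-b+b'$ nonnegative so that, for $T\in(0,1)$, the factor is a genuine gain enabling the contraction argument in Section 4.
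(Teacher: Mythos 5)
First, a structural remark: the paper does not prove this proposition at all --- it is quoted from Saut--Tzvetkov \cite{SauTzv2} with the remark ``the following two results are known''. So your attempt can only be measured against the standard argument. Your proof of (\ref{homo}) is correct and is the standard one, and your two reductions for (\ref{inhomo}) --- stripping the spatial weights and conjugating away $\omega(\xi,\mu)$ to reach the one--dimensional estimate for $\psi_T(t)\int_0^t g$ --- are also correct.

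The gap is exactly the $T$--bookkeeping you identify as ``the main obstacle'' but do not actually close, and it sits in the high--frequency half of your splitting. You cut at $|\tau|=1$; the argument only works if you cut at $|\tau|=1/T$. Concretely, writing $h=\bigl(\hat g(\tau)(i\tau)^{-1}\mathbf{1}_{\{|\tau|>1\}}\bigr)^\vee$, your inequality $\langle\tau\rangle^{b-1}\le\langle\tau\rangle^{b'}$ gives $\|h\|_{H^b}\lesssim\|g\|_{H^{b'}}$ with \emph{no} power of $T$, and multiplication by $\psi_T$ cannot then manufacture the factor $T^{1-b+b'}$: taking $\hat g$ a bump at $|\tau|\sim 2$ with $\|g\|_{H^{b'}}\sim 1$, both $\|\psi_T h\|_{H^b}$ and the constant piece $\|h(0)\psi_T\|_{H^b}$ are of size $T^{\frac12-b}$, and $T^{\frac12-b}\gg T^{1-b+b'}$ as $T\to 0$ precisely because $b'>-\tfrac12$. (The two terms nearly cancel, since $h(t)-h(0)=O(t)$ near $t=0$; your term--by--term treatment discards that cancellation.) With the cut at $|\tau|=1/T$ each term closes separately: on $\{|\tau|\ge 1/T\}$ one has $\langle\tau\rangle^{b-1}\le T^{1+b'-b}\langle\tau\rangle^{b'}$ because $b-1-b'\le 0$, which upgrades your own inequality to one carrying the required factor (this bounds the ``Term B'' contribution after splitting $\langle\tau\rangle^b\lesssim\langle\tau-\lambda\rangle^b+\langle\lambda\rangle^b$ in the convolution $\hat\psi_T*\hat h$ and applying Young), while $|h(0)|\le\int_{|\tau|>1/T}|\hat g||\tau|^{-1}\,d\tau\lesssim T^{\frac12+b'}\|g\|_{H^{b'}}$ pairs with $\|\psi_T\|_{H^b}\sim T^{\frac12-b}$ to give $T^{1-b+b'}$. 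This last tail integral, not behaviour near the origin, is where $b'>-\tfrac12$ is used: $\langle\tau\rangle^{-2b'}$ is bounded near $\tau=0$ for every $b'$, so your stated reason for that hypothesis is misplaced. Your low--frequency Taylor expansion is fine with either threshold.
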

\begin{prop}\cite{BAS}\label{prop2} If $r\in [2,\infty)$, then there
exists a constant $c>0$ independent of $T\in (0,1)$ such that
\begin{equation}\label{Strichartz}
\big\||-i\partial_x|^{\frac12-\frac1r}\big(S(t)\phi\big)(x,y)\big\|_{L^{\frac{2r}{r-2}}_TL^r(\mathbb
R^2)}\leq c\|\phi\|_{L^2(\mathbb R^2)},
\end{equation}
\end{prop}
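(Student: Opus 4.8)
The plan is to deduce (\ref{Strichartz}) from a single fixed-time dispersive estimate for the propagator $S(t)$, combined with the $L^2$-conservation law and the standard $TT^*$/Hardy--Littlewood--Sobolev machinery; since the bound produced this way is global in $t$ and in $(x,y)$, its restriction to an arbitrary time window $[0,T]$ automatically carries a constant independent of $T\in(0,1)$, which is all the statement demands. The two fixed-time endpoints are the trivial isometry $\|S(t)\phi\|_{L^2(\mathbb R^2)}=\|\phi\|_{L^2(\mathbb R^2)}$, which holds because $|e^{it\omega(\xi,\mu)}|=1$, and the genuine decay estimate
\begin{equation*}
\big\||-i\partial_x|^{\frac12}S(t)\phi\big\|_{L^\infty(\mathbb R^2)}\lesssim |t|^{-1}\|\phi\|_{L^1(\mathbb R^2)}.
\end{equation*}
Complex interpolation between these two gives, for each $r\in[2,\infty)$, the fixed-time bound $\||-i\partial_x|^{\frac12-\frac1r}S(t)\phi\|_{L^r}\lesssim |t|^{-(1-\frac2r)}\|\phi\|_{L^{r'}}$, in which the derivative weight $\frac12-\frac1r$ and the decay exponent $1-\frac2r$ fall out of the interpolation parameter $\theta=1-\frac2r$. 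Feeding this into the $TT^*$ identity produces a time-convolution kernel $|t-t'|^{-(1-\frac2r)}$, and the Hardy--Littlewood--Sobolev inequality converts it into the space-time bound on the scale $L^{\frac{2r}{r-2}}_tL^r$ precisely because the admissibility relation $\frac2q=1-\frac2r$ matches the convolution exponent, with $0<1-\frac2r<1$ for $2<r<\infty$; the case $r=2$ is the isometry itself.

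The dispersive estimate is in turn reduced, via $S(t)\phi=\phi*K_t$, to a uniform pointwise bound on the kernel
\begin{equation*}
K_t(x,y)=\int_{\mathbb R^2}|\xi|^{\frac12}e^{i(x\xi+y\mu+t\omega(\xi,\mu))}\,d\mu\,d\xi,\qquad \omega(\xi,\mu)=\xi^5-\alpha\xi^3+\tfrac{\mu^2}{\xi}.
\end{equation*}
The integration in $\mu$ is an exact Fresnel integral with quadratic phase $y\mu+t\mu^2/\xi$; completing the square evaluates it to $\sqrt{\pi}\,|\xi/t|^{1/2}e^{i(\cdots)}$, contributing the decay factor $|t|^{-1/2}|\xi|^{1/2}$ and reducing $K_t$ to the one-dimensional oscillatory integral
\begin{equation*}
K_t(x,y)=c\,|t|^{-\frac12}\int_{\mathbb R}|\xi|\,e^{i(a\xi+t\xi^5-\alpha t\xi^3)}\,d\xi,\qquad a=x-\tfrac{y^2}{4t},
\end{equation*}
after splitting into the contributions of $\xi>0$ and $\xi<0$ carried by the Fresnel phase factor. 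It therefore suffices to establish the uniform one-dimensional bound $\big|\int_{\mathbb R}|\xi|\,e^{i(a\xi+t\xi^5-\alpha t\xi^3)}\,d\xi\big|\lesssim |t|^{-1/2}$, with the implied constant independent of $a,\alpha,t$, since the two factors $|t|^{-1/2}$ then combine to the desired $|t|^{-1}$.

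This last one-dimensional estimate is the heart of the matter and the step I expect to be the main obstacle. The quintic phase $p(\xi)=t\xi^5-\alpha t\xi^3+a\xi$ satisfies $p''(\xi)=2t\xi(10\xi^2-3\alpha)$, so in the nondegenerate range $|\xi|^2>|\alpha|$ singled out in (\ref{5KP-1sign}) one has $|p''(\xi)|\sim|t||\xi|^3$; on each dyadic shell $|\xi|\sim N$ in this range van der Corput's lemma with the monotone weight $|\xi|$ yields a contribution $\lesssim N(|t|N^3)^{-1/2}=|t|^{-1/2}N^{-1/2}$, which sums to an acceptable $|t|^{-1/2}$. The difficulty is concentrated in the complementary degenerate range $|\xi|^2\lesssim|\alpha|$, where $p''$ can vanish together with $p'$, producing an Airy-type cubically degenerate critical point near $|\xi|^2\sim\frac{3\alpha}{10}$; there the second-derivative van der Corput estimate must be replaced by the third-derivative one, and the delicate point is to keep all bounds uniform in $a$, $\alpha$, and $t$. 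The weight $|\xi|$ --- equivalently the derivative loss $|-i\partial_x|^{1/2-1/r}$ appearing in the statement --- is exactly calibrated to control the region $\xi\to0$, where the Fresnel factor $|\xi|^{1/2}$ degenerates and the symbol $\mu^2/\xi$ is singular. Assembling the summed nondegenerate dyadic contributions with the degenerate Airy-type contribution gives the uniform $|t|^{-1/2}$ bound and hence the dispersive estimate; this sharp uniform control of oscillatory integrals with degenerate quintic phases is precisely the content of the estimates invoked from \cite{BAS}.
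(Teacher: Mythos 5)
The paper itself offers no proof of this proposition --- it is quoted verbatim from Ben-Artzi--Saut \cite{BAS} (see also \cite{SauTzv1,SauTzv2}) --- so there is nothing internal to compare against; your architecture (fixed-time dispersive estimate, interpolation with the $L^2$ isometry, $TT^*$ plus Hardy--Littlewood--Sobolev, reduction via the exact Fresnel integral in $\mu$ to a uniform one-dimensional oscillatory integral with quintic phase) is indeed the standard route and matches the cited reference in spirit. However, there is a concrete gap in the derivative bookkeeping that prevents the argument from closing as written. If $T\phi=|-i\partial_x|^{\frac12-\frac1r}S(t)\phi$, then $TT^*F=\int |-i\partial_x|^{2(\frac12-\frac1r)}S(t-t')F(t')\,dt'=\int |-i\partial_x|^{1-\frac2r}S(t-t')F(t')\,dt'$: the kernel of $TT^*$ carries \emph{twice} the derivative weight of $T$. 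The fixed-time estimate you must feed into HLS is therefore $\||-i\partial_x|^{1-\frac2r}S(\tau)\psi\|_{L^r}\lesssim|\tau|^{-(1-\frac2r)}\|\psi\|_{L^{r'}}$, whose $\theta=1$ interpolation endpoint is
\begin{equation*}
\big\||-i\partial_x|\,S(\tau)\phi\big\|_{L^\infty(\mathbb R^2)}\lesssim |\tau|^{-1}\|\phi\|_{L^1(\mathbb R^2)},
\end{equation*}
with a \emph{full} $x$-derivative, not the half-derivative you state. Your interpolated bound $\||-i\partial_x|^{\frac12-\frac1r}S(\tau)\psi\|_{L^r}\lesssim|\tau|^{-(1-\frac2r)}\|\psi\|_{L^{r'}}$ controls an operator with half the weight appearing in $TT^*$, so it does not apply; run through $TT^*$ correctly, your half-derivative endpoint only yields the weaker gain $|-i\partial_x|^{\frac14-\frac1{2r}}$. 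Correspondingly, after the Fresnel factor $|\xi/t|^{1/2}$ the reduced one-dimensional integral must carry the weight $|\xi|^{3/2}$, not $|\xi|$, and the target is $\big|\int|\xi|^{3/2}e^{i(a\xi+t\xi^5-\alpha t\xi^3)}d\xi\big|\lesssim|t|^{-1/2}$; the stationary-phase count $|\xi_c|^{3/2}\,|t\xi_c^{3}|^{-1/2}=|t|^{-1/2}$, uniform in the location of the critical point, shows that $3/2$ is exactly the critical exponent --- which is both why the estimate is true with no room to spare and why the uniform analysis in \cite{BAS} is delicate.

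Two secondary points. First, your shell-by-shell van der Corput count in the ``nondegenerate'' region gives $|t|^{-1/2}N^{-1/2}$ per dyadic shell $|\xi|\sim N$, and the sum over $N\to 0$ diverges (and with the correct weight $|\xi|^{3/2}$ each shell contributes exactly $|t|^{-1/2}$, so the sum over \emph{all} shells diverges as well); a crude dyadic decomposition without localizing to the stationary set of the phase cannot produce the uniform bound, and one must genuinely exploit that only $O(1)$ shells carry a critical point while the rest are handled by nonstationary-phase integration by parts. Second, since you ultimately defer the uniform control of the degenerate (Airy-type) critical points to \cite{BAS} anyway, the honest content of your write-up is the reduction scheme --- which is correct in outline but must be rerun with the weights $|D_x|^{1}$ and $|\xi|^{3/2}$ as above to actually deliver the gain $\frac12-\frac1r$ claimed in the proposition.
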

where
$$
\|f\|_{L^{\frac{2r}{r-2}}_TL^r(\mathbb R^2)}=\left(\int_{-T}^T\left(\int_{\mathbb R^2}|f(x,y,t)|^rdxdy\right)^{\frac{2}{r-2}}dt\right)^{\frac{r-2}{2r}}.
$$

To reach our bilinear inequalities in Section 3, we will use
$(\cdot)^\vee$ for the inverse Fourier transform, and take the
dyadic decomposed Strichartz estimates below into account.

\begin{prop}\label{dydicstri} Let $\eta$ be a bump function with
compact support in $[-2,2]\subset\mathbb R$ and $\eta=1$ on
$(-1,1)\subset\mathbb R$. For each integer $j\ge 1$ set
$\eta_j(x)=\eta(2^{-j}x)-\eta(2^{1-j}x)$, $\eta_0(x)=\eta(x)$,
$\eta_j(\xi,\mu,\tau)=\eta_j(\tau-\omega(\xi,\mu))$, and
$f_j(\xi,\mu,\tau)=(\eta_j(\xi,\mu,\tau)|\hat{f}|(\xi,\mu,\tau))^\vee
$ for any given $f\in L^2(\mathbb R^3)$. Then for given $r\in
[2,\infty)$ and any $T\in (0,1)$ we have
\begin{equation}\label{eq:3}
\big\||-i\partial_x|^{\frac12-\frac1r}f_j\big\|_{L^{\frac{2r}{r-2}}_T
L^r(\mathbb R^2)}\lesssim 2^{\frac{j}{2}}\|f_j\|_{L^2(\mathbb R^3)}.
\end{equation}
In particular,
\begin{equation}\label{stri-4}
\big\||-i\partial_x|^{\frac{1}{4}}f_j\big\|_{L^4_TL^4(\mathbb
R^2)}\lesssim 2^{\frac{j}{2}}\|f_j\|_{L^2(\mathbb R^3)}.
\end{equation}
\end{prop}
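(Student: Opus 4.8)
The plan is to establish (\ref{eq:3}) by the standard transfer (or ``localization'') principle, which converts the linear Strichartz inequality (\ref{Strichartz}) of Proposition \ref{prop2} into an estimate for functions whose space-time Fourier support is confined to a dyadic shell of the modulation variable $\lambda=\tau-\omega(\xi,\mu)$. First I would write $f_j$ as a continuous superposition of free evolutions indexed by $\lambda$. Since $\widehat{f_j}(\xi,\mu,\tau)=\eta_j(\tau-\omega(\xi,\mu))|\widehat f|(\xi,\mu,\tau)$, performing for each fixed $(\xi,\mu)$ the change of variables $\tau\mapsto\lambda=\tau-\omega(\xi,\mu)$, whose Jacobian is $1$, yields the representation
\begin{equation*}
f_j(x,y,t)=\int_{\mathbb R}e^{it\lambda}\,\eta_j(\lambda)\,\big(S(t)G_\lambda\big)(x,y)\,d\lambda,\qquad \widehat{G_\lambda}(\xi,\mu)=|\widehat f|\big(\xi,\mu,\lambda+\omega(\xi,\mu)\big).
\end{equation*}

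Next I would apply $|-i\partial_x|^{1/2-1/r}$, which acts only in $x$ and hence commutes with the $\lambda$-integral and with $e^{it\lambda}$, and then take the mixed norm $L^{2r/(r-2)}_TL^r$. Minkowski's integral inequality lets me pull this norm inside the $\lambda$-integral, and since $|e^{it\lambda}|=1$ the exponential drops out pointwise in $t$, leaving
\begin{equation*}
\big\||-i\partial_x|^{\frac12-\frac1r}f_j\big\|_{L^{\frac{2r}{r-2}}_TL^r}\le\int_{\mathbb R}|\eta_j(\lambda)|\,\big\||-i\partial_x|^{\frac12-\frac1r}S(t)G_\lambda\big\|_{L^{\frac{2r}{r-2}}_TL^r}\,d\lambda.
\end{equation*}
Now Proposition \ref{prop2}, whose constant is independent of $T\in(0,1)$, bounds each slice by $c\|G_\lambda\|_{L^2(\mathbb R^2)}$, and by Plancherel $\|G_\lambda\|_{L^2(\mathbb R^2)}=\|\,|\widehat f|(\cdot,\cdot,\lambda+\omega)\|_{L^2_{\xi,\mu}}$.

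Finally I would extract the factor $2^{j/2}$ by a Cauchy--Schwarz step in $\lambda$. The key geometric input is that $\operatorname{supp}\eta_j\subset\{2^{j-1}\le|\lambda|\le 2^{j+1}\}$ for $j\ge1$ (and $\operatorname{supp}\eta_0\subset[-2,2]$), so this set has Lebesgue measure $\lesssim 2^j$; together with $|\eta_j|\le1$ this gives
\begin{equation*}
\int|\eta_j(\lambda)|\,\|G_\lambda\|_{L^2}\,d\lambda\le|\operatorname{supp}\eta_j|^{1/2}\Big(\int|\eta_j(\lambda)|^2\|G_\lambda\|_{L^2}^2\,d\lambda\Big)^{1/2}\lesssim 2^{j/2}\|f_j\|_{L^2(\mathbb R^3)},
\end{equation*}
where the last step undoes the earlier change of variables and invokes Plancherel via $\|f_j\|_{L^2}^2=\int|\eta_j(\tau-\omega)|^2|\widehat f|^2\,d\xi\,d\mu\,d\tau$. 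This proves (\ref{eq:3}); the special case (\ref{stri-4}) is then immediate on setting $r=4$, for which $\tfrac12-\tfrac1r=\tfrac14$ and $\tfrac{2r}{r-2}=4$. The only real subtlety is the bookkeeping of the two changes of variables and the verification that Cauchy--Schwarz over the modulation shell produces exactly the claimed power $2^{j/2}$ rather than a larger loss; everything else reduces to the already-established linear estimate.
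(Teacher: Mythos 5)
Your proof is correct and follows essentially the same route as the paper: the paper derives exactly the same representation $f_j(x,y,t)=\int_{\mathbb R}e^{it\lambda}\eta_j(\lambda)S(t)f_\lambda(x,y)\,d\lambda$ and then invokes Minkowski's inequality, the Strichartz estimate of Proposition \ref{prop2}, and the Cauchy--Schwarz inequality, which is precisely the chain of steps you carry out. Your write-up merely makes explicit the details (the $\lesssim 2^j$ measure of the modulation shell and the Plancherel bookkeeping) that the paper leaves to the reader.
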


\begin{proof}: Note first that
$$
f_j(x,y,t)=\int_{\mathbb{R}^3}e^{i(x\xi+y\mu+t\tau)}|\hat{f}|\eta_j(\xi,\mu,\tau)d\xi d\mu d\tau.
$$
So, changing variables and using
$\widehat{f_\lambda}(\xi,\mu)=|\hat{f}|(\xi,\mu,\lambda+\omega)$ we
can write
\begin{equation*}
\begin{split}
f_j(x,y,t)&=\int_{\mathbb{R}^3}e^{i(x\xi+y\mu+t(\lambda+\omega))}|\hat{f}|(\xi,\mu,\lambda+\omega)
\eta_j(\lambda)d\xi d\mu d\lambda \\
 &=\int_{\mathbb{R}}e^{it\lambda}\eta_j(\lambda)
\Big[\int_{\mathbb{R}^2}e^{i(x\xi+y\mu+t\omega)}|\hat{f}|(\xi,\mu,\lambda+\omega)
d\xi d\mu \Big]d\lambda \\
&=\int_{\mathbb{R}} e^{it\lambda}\eta_j(\lambda)
S(t)f_\lambda(x,y)d\lambda.
\end{split}
\end{equation*}
Now the estimate (\ref{eq:3}) follows from Minkowski's inequality, the
Strichartz estimate (\ref{Strichartz}) and the Cauchy-Schwarz
inequality.
\end{proof}

The following well-known elementary inequalities are also useful --
see for example \cite[Proposition 2.2]{SauTzv2}.

\begin{prop} Let $\gamma>1$. Then
\begin{equation}\label{2.1}
\int_{\mathbb
R}\frac{dt}{<t>^\gamma<t-a>^\gamma}\lesssim<a>^{-\gamma}
\end{equation}
and
\begin{equation}\label{2.2}
\int_{\mathbb
R}\frac{dt}{<t>^\gamma|t-a|^{\frac{1}{2}}}\lesssim<a>^{-\frac{1}{2}}
\end{equation}
hold for any $a\in\mathbb R$
\end{prop}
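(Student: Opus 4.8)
The plan is to treat both inequalities as one-dimensional convolution-type estimates, each handled by splitting the line of integration according to which denominator factor is the larger. For \eqref{2.1} the key elementary fact is that $|a|\le|t|+|t-a|\le 2\max\{|t|,|t-a|\}$, so that $\max\{<t>,<t-a>\}\gtrsim<a>$ for every $t$. I would split $\mathbb R=A\cup B$ with $A=\{t:|t|\ge|t-a|\}$ and $B=\{t:|t|<|t-a|\}$. On $A$ one has $|t|\ge|a|/2$, hence $<t>\gtrsim<a>$, so that $<t>^{-\gamma}\lesssim<a>^{-\gamma}$ and
\[
\int_A\frac{dt}{<t>^\gamma<t-a>^\gamma}\lesssim<a>^{-\gamma}\int_{\mathbb R}\frac{dt}{<t-a>^\gamma}\lesssim<a>^{-\gamma},
\]
the last integral converging precisely because $\gamma>1$. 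The set $B$ is treated symmetrically: there $|t-a|\ge|a|/2$, so $<t-a>^{-\gamma}\lesssim<a>^{-\gamma}$ and one integrates the remaining factor $<t>^{-\gamma}$ over $\mathbb R$. Adding the two contributions gives \eqref{2.1}.

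For \eqref{2.2} the new feature is the singularity $|t-a|^{-1/2}$ at $t=a$, whose exponent $1/2<1$ keeps it locally integrable. By the reflection $t\mapsto-t$ I may assume $a\ge0$, and since the integral is finite and continuous in $a$ it is uniformly bounded for $a$ in any compact set; thus it suffices to prove the bound for $a\ge2$, where $<a>\sim a$. Here I would split $\mathbb R$ into the near region $I=[a/2,2a]$ and its complement. On $I$ we have $<t>\sim a$, so extracting $<t>^{-\gamma}\sim a^{-\gamma}$ and using $\int_{a/2}^{2a}|t-a|^{-1/2}\,dt\sim a^{1/2}$ yields a contribution $\sim a^{1/2-\gamma}$; because $\gamma>1$ we have $1/2-\gamma<-1/2$, so this is $\le a^{-1/2}\sim<a>^{-1/2}$. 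Off $I$ one has $|t-a|\ge a/2$, whence $|t-a|^{-1/2}\lesssim a^{-1/2}$ while the weight $<t>^{-\gamma}$ integrates to a constant, again producing a contribution $\lesssim a^{-1/2}$. Summing the two pieces gives \eqref{2.2}.

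Neither estimate presents a genuine obstacle; the only real content is bookkeeping in the region decompositions and keeping track of the precise role of the exponents. The hypothesis $\gamma>1$ is used twice: to make the heavier weight $<\cdot>^{-\gamma}$ integrable over all of $\mathbb R$, and, in \eqref{2.2}, to guarantee $1/2-\gamma<-1/2$ so that the near-singularity region is controlled by the sharp power $<a>^{-1/2}$ rather than a weaker one. The half-order singularity in \eqref{2.2} is harmless since $1/2<1$, and the small-$a$ regime requires no work because there $<a>\sim1$, so that boundedness of the left-hand side already gives the claimed $<a>^{-\gamma}$ (resp. $<a>^{-1/2}$) bound up to a constant.
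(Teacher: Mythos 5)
Your argument is correct, and both estimates are handled by the standard (and essentially only) method: split the line according to which of the two factors is bounded below by $\langle a\rangle$ (respectively, isolate the region near the singularity $t=a$), use $|a|\le|t|+|t-a|$, and invoke $\gamma>1$ for integrability of the tail. The paper itself supplies no proof of this proposition --- it is quoted as a known elementary fact from \cite[Proposition 2.2]{SauTzv2} --- so there is nothing in the text to diverge from; your write-up fills that gap with the same splitting argument one finds in the cited source.
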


 \hspace{5mm}

\section{Bilinear Estimates}

Although there were many works on the so-called bilinear estimates,
we have found that the Kenig-Ponce-Vega's bilinear estimation
approach introduced in \cite{KPV} is quite suitable for our purpose.
With the convention: when $a\in\mathbb R$ the number $a\pm$ equals
$a\pm\epsilon$ for arbitrarily small number $\epsilon>0$, we can
state our bilinear estimate as follows.

\begin{theorem}\label{biestimate} If $s_1,s_2\geq 0$ and functions $u,v$ have compact time support on
$[-T,T]$ with $0<T<1$, then
\begin{equation}\label{bilinear}\|\partial_x(uv)\|_{X^{s_1,s_2}_{-\frac{1}{2}+}}\lesssim
\|u\|_{X^{s_1,s_2}_{\frac{1}{2}+}}\|v\|_{X^{s_1,s_2}_{\frac{1}{2}+}}.
\end{equation}
\end{theorem}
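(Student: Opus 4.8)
The plan is to reduce the bilinear estimate to a weighted integral inequality via duality and Plancherel, and then to control that integral by a dyadic decomposition governed by the size of the resonance function $R$ from \eqref{5KP-Iresonance}. First I would write $\widehat{\partial_x(uv)}(\tau,\xi,\mu)=i\xi\,\widehat{u}\ast\widehat{v}$, pass to the variables $\sigma=\tau-\omega(\xi,\mu)$, $\sigma_1=\tau_1-\omega(\xi_1,\mu_1)$, $\sigma_2=\tau_2-\omega(\xi_2,\mu_2)$ on the convolution simplex $\xi=\xi_1+\xi_2$, $\mu=\mu_1+\mu_2$, $\tau=\tau_1+\tau_2$, and dualize against a test function $w\in L^2(\mathbb R^3)$. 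Setting $f=\langle\sigma_1\rangle^{\frac12+}\langle\xi_1\rangle^{s_1}\langle\mu_1\rangle^{s_2}|\widehat u|$ and similarly $g$ for $v$, the claim \eqref{bilinear} is equivalent to bounding
\begin{equation*}
\int\frac{|\xi|\,\langle\xi\rangle^{s_1}\langle\mu\rangle^{s_2}\,|\widehat w|\,f\,g}{\langle\sigma\rangle^{\frac12-}\,\langle\sigma_1\rangle^{\frac12+}\langle\sigma_2\rangle^{\frac12+}\,\langle\xi_1\rangle^{s_1}\langle\mu_1\rangle^{s_2}\langle\xi_2\rangle^{s_2}\langle\mu_2\rangle^{s_2}}
\end{equation*}
by $\|w\|_{L^2}\|f\|_{L^2}\|g\|_{L^2}$. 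The first routine step is to dispose of the Sobolev weights: since $\langle\xi\rangle^{s_1}\lesssim\langle\xi_1\rangle^{s_1}+\langle\xi_2\rangle^{s_1}$ and likewise for $\mu$ with $s_1,s_2\ge0$, the numerator weights are dominated by the denominator weights, so it suffices to prove the $s_1=s_2=0$ case (up to symmetrizing $f$ and $g$).

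The core mechanism is the algebraic identity $\sigma-\sigma_1-\sigma_2=-R(\xi_1,\xi_2,\mu_1,\mu_2)$, which forces $\max\{\langle\sigma\rangle,\langle\sigma_1\rangle,\langle\sigma_2\rangle\}\gtrsim\langle R\rangle$. I would decompose dyadically in the three modulation variables and in the frequency variables, and split into regions according to which of $\langle\sigma\rangle,\langle\sigma_1\rangle,\langle\sigma_2\rangle$ is largest and which of $|\xi_1|,|\xi_2|,|\xi|$ is largest (the low-high, high-low, and high-high interactions). In the first two interaction types, one frequency is comparable to $|\xi|$ while another is small; here the dyadic Strichartz estimate \eqref{stri-4}, applied after localizing the modulation to level $2^j$, together with the elementary convolution bounds \eqref{2.1}--\eqref{2.2}, should close the estimate with room to spare, in the standard Kenig-Ponce-Vega $L^4$-based fashion: two factors are estimated in $L^4_{T}L^4$ via \eqref{stri-4} and the gain $|\xi|^{1/4}$ is used to absorb the derivative loss, while the third factor and $w$ are paired by Cauchy-Schwarz.

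The hard part will be the high-high interaction, where $|\xi_1|\sim|\xi_2|\gg|\xi|$ and the factor $|\xi|$ in the numerator is of no help. As the authors emphasize in the introduction, the fifth order KP-I resonance \eqref{5KP-Iresonance} does \emph{not} satisfy a clean lower bound like \eqref{eqlast}; the resonant set where $R$ is small is genuinely a curve rather than being empty, because the two signs in $(\xi_1+\xi_2)^2[5(\xi_1^2+\xi_1\xi_2+\xi_2^2)-3\alpha]$ and $(\mu_1/\xi_1-\mu_2/\xi_2)^2$ can cancel. My plan here is to exploit the geometric structure of this resonant set: fixing the spatial frequencies and the largest modulation, the constraint $\sigma-\sigma_1-\sigma_2=-R$ together with $\partial_\mu R=\mp 2(\xi_1+\xi_2)\xi_1^{-1}\xi_2^{-1}(\mu_1/\xi_1-\mu_2/\xi_2)$ gives a transversality that I would convert, via \eqref{2.2} and a change of variables in $\mu_1$ (or $\mu_2$), into an integrable singularity along the resonance. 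The smoothing estimate \eqref{5KP-1sign}, $|\nabla\omega|\gtrsim|\xi|^2$ for $|\xi|^2>|\alpha|$, is what guarantees this transversality is nondegenerate away from the low-frequency truncation $|\xi|^2\le|\alpha|$, which is itself handled directly since it is a bounded-frequency region. Carefully summing the resulting dyadic contributions over $j$ and over the frequency scales, using the $\tfrac12{+}$ and $\tfrac12{-}$ exponents to produce a convergent geometric series, should complete the proof.
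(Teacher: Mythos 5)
Your overall architecture---duality, discarding the anisotropic weights using $s_1,s_2\ge 0$, dyadic decomposition in the modulations, the $L^4$ Strichartz estimate (\ref{stri-4}) for the interactions involving a small frequency, and the elementary bounds (\ref{2.1})--(\ref{2.2}) via Cauchy--Schwarz---coincides with the paper's and would carry you through every case except the resonant high-high interaction. It is exactly there that your plan has a genuine gap. You propose to treat the region where all three modulations are small by a single change of variables in $\mu_1$ (or $\mu_2$), using the transversality $|\partial_{\mu_1}R|=2|\mu_1/\xi_1-\mu_2/\xi_2|\gtrsim|\xi|\,|\xi_1|$ on the resonant set together with (\ref{2.2}). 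That is the Kenig--Ponce--Vega $L^2$-kernel scheme, and it does not close here: after Cauchy--Schwarz one must bound a kernel of the form $\frac{|\xi|}{\langle\sigma\rangle^{1/2-}}\bigl(\int\frac{d\xi_1\,d\mu_1}{\langle\sigma+R\rangle^{1+}}\bigr)^{1/2}$, and the $\mu_1$-transversality only yields a factor of order $(|\xi|\,|\xi_1|)^{-1}$ for each fixed $\xi_1$, after which the remaining integral $\int_{|\xi_1|\gtrsim|\xi|}(|\xi|\,|\xi_1|)^{-1}d\xi_1$ diverges. Crucially, there is no compensating gain from $\langle\sigma\rangle^{-1/2+}$: on the resonant variety all three modulations can stay $O(1)$ while $|\xi_1|\to\infty$. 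One degree of transversality (in $\mu$) is therefore not enough; you must also localize the remaining frequency variable.

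The paper's proof supplies precisely the missing ingredients. It first peels off, by Strichartz, the regimes $\max\{|\sigma|,|\sigma_2|\}\gtrsim|\xi_1|^2$ and then $\max\{|\sigma|,|\sigma_1|,|\sigma_2|\}\gtrsim|\xi_1|^4$, reducing to the truly resonant block with $|\sigma|\sim 2^j$ and $|\xi_1|\sim 2^m$. There it passes to the output variables $(u,v,w)$ plus one free variable, and splits according to whether $J_\mu=5(\xi_1^4-\xi_2^4)-3\alpha(\xi_1^2-\xi_2^2)-[(\mu_1/\xi_1)^2-(\mu_2/\xi_2)^2]$ exceeds $2^j$ or not. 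If it does, the free variable is $\mu_2$, and the resonance identity confines $\mu_2$ to a set of measure $\Delta_{\mu_2}\lesssim 2^j|u|^{-2}$, which absorbs the derivative factor $|u|$. If it does not, the free variable is $\xi_1$, whose Jacobian $J_\xi=2(\mu_1/\xi_1-\mu_2/\xi_2)\gtrsim|\xi_1|$ is nondegenerate by the resonance relation, and $\xi_1$ is confined to a set of measure $\Delta_{\xi_1}\lesssim 2^{j-3m}$ because $|h'(\xi_1)|\gtrsim|\xi_1|^3$ for $h$ as in (\ref{eq:3.18}). These two measure estimates---localization of the resonant set in \emph{both} the $\mu$ and the $\xi$ directions, toggled according to which Jacobian is nondegenerate at scale $2^j$---are what your sketch lacks; without them the sums over the frequency scale $2^m$ do not converge. (Minor point: your formula for $\partial_\mu R$ has the prefactor $(\xi_1+\xi_2)/(\xi_1\xi_2)$, whereas differentiating (\ref{5KP-Iresonance}) in $\mu_1$ at fixed total $\mu$ gives $-2(\mu_1/\xi_1-\mu_2/\xi_2)$; this is immaterial next to the structural issue.)
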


\begin{proof} In what follows, we derive (\ref{bilinear}) using the duality;
that is, we are required to dominate the integral
\begin{equation}\label{eq:3.1}
\begin{split}
\int_{A^\ast}\frac{|\xi|<\xi>^{s_1}<\mu>^{s_2}}{<\tau-\omega(\xi,\mu)>^{\frac{1}{2}-}}g(\xi,\mu,\tau)
|\hat{u}|(\xi_1,\mu_1,\tau_1)|\hat{v}|(\xi_2,\mu_2,\tau_2)d\xi_1d\mu_1d\tau_1d\xi_2d\mu_2d\tau_2,
\end{split}
\end{equation}
where $g\ge 0$, $\|g\|_{L^2(\mathbb R^2)}\leq 1$ and
$$
A^\ast=\big\{(\xi_1,\mu_1,\tau_1,\xi_2,\mu_2,\tau_2)\in\mathbb R^6:\,\,
\xi_1+\xi_2=\xi,\,\mu_1+\mu_2=\mu,\,\tau_1+\tau_2=\tau\big\}.
$$
Let
$$
\sigma=\tau-\omega(\xi,\mu);\,\,\sigma_1=\tau_1-\omega(\xi_1,\mu_1);\,\,\sigma_2=\tau_2-\sigma(\xi_2,\mu_2).
$$
Define two functions below:
$$
f_1(\xi_1,\mu_1,\tau_1)=<\xi_1>^{s_1}<\mu_1>^{s_2}<\sigma_1>^{\frac{1}{2}+}|\hat{u}(\xi_1,\mu_1,\tau_1)|
$$
and
$$
f_2(\xi_2,\mu_2,\tau_2)=<\xi_2>^{s_1}<\mu_2>^{s_2}<\sigma_2>^{\frac{1}{2}+}|\hat{v}(\xi_2,\mu_2,\tau_2)|.
$$
Then we need to bound the integral
\begin{equation}\label{eq:3.2}
\begin{split}
\int_{A^\ast} K(\xi_1,\mu_1,\tau_1,\xi_2,\mu_2,\tau_2)g(\xi,\mu,\tau)
f_1(\xi_1,\mu_1,\tau_1)f_2(\xi_2,\mu_2,\tau_2)d\xi_1d\mu_1d\tau_1d\xi_2d\mu_2d\tau_2
\end{split}
\end{equation}
from above by using a constant multiple of $\|f_1\|_{L^2(\mathbb
R^3)} \|f_2\|_{L^2(\mathbb R^3)}$. Here
\begin{eqnarray*}
K(\xi_1,\mu_1,\tau_1,\xi_2,\mu_2,\tau_2)&=&\left(\frac{|\xi_1+\xi_2|}
{<\sigma>^{\frac{1}{2}-}<\sigma_1>^{\frac{1}{2}+}<\sigma_2>^{\frac{1}{2}+}}\right)\\
&\quad\quad\times&
\left(\frac{<\xi_1+\xi_2>^{s_1}}{<\xi_1>^{s_1}<\xi_2>^{s_1}}\right)
\left(\frac{<\mu_1+\mu_2>^{s_2}}{<\mu_1>^{s_2}<\mu_2>^{s_2}}\right).
\end{eqnarray*}
It is clear that for $s_1,s_2\geq 0$ we always have
$$K(\xi_1,\mu_1,\tau_1,\xi_2,\mu_2,\tau_2)\lesssim
\frac{|\xi_1+\xi_2|}{<\sigma>^{\frac{1}{2}-}<\sigma_1>^{\frac{1}{2}+}<\sigma_2>^{\frac{1}{2}+}}.$$

Keeping a further assumption $|\xi_1|\geq|\xi_2|$ (which follows from
symmetry) in mind, we are about to fully control
the integral in (\ref{eq:3.2}) through handling two situations.

\medskip

\noindent$\bullet$\quad{\bf Situation 1 -- Low Frequency}\quad
$|\xi_1+\xi_2|\lesssim\max\{10,|\alpha|\}.$

\medskip

$\circ$\quad{\it High+High$\rightarrow$Low}\quad
$|\xi_1|,|\xi_2|\gtrsim\max\{10,|\alpha|\}$.\quad We first deduce a
dyadic decomposition. Employing $\eta_j$ in Proposition
\ref{dydicstri}, we have $\sum_{j\geq 0}\eta_j=1$, and consequently
(\ref{eq:3.2}) can be bounded from above by a constant multiple of
\begin{equation}\label{eq:3.4}
\sum_{j\geq0}2^{-j(\frac{1}{2}-)}\int_{A^\ast}\eta_j(\sigma)g(\xi,\mu,\tau)
\left(\frac{f_1(\xi_1,\mu_1,\tau_1)}{<\sigma_1>^{\frac{1}{2}+}}\right)
\left(\frac{f_2(\xi_2,\mu_2,\tau_2)}{<\sigma_2>^{\frac{1}{2}+}}\right)d\xi_1d\mu_1d\tau_1d\xi_2d\mu_2d\tau_2.
\end{equation}
We may assume that for each natural number $j$,
$$
G_j(x,y,t)=\mathcal{F}^{-1}\Big(\eta_j(\sigma)g(\xi,\mu,\tau)\Big)(x,y,t),
$$
has support compact in the interval $[-T,T]$ whenever it acts as a
time-dependent function, where $\mathcal F^{-1}$ also denotes the
inverse Fourier transform. In fact, if we consider the following
functions generated by $\mathcal F^{-1}$:
$$
F_l(x,y,t)=\mathcal{F}^{-1}\Bigg(\frac{f_l(\xi_l,\mu_l,\tau_k)}{<\sigma_l>^{\frac{1}{2}+}}\Bigg)(x,y,t)\quad\text{for}\quad l=1,2,
$$
then the integral in (\ref{eq:3.4}) can be written as an $L^2$ inner product $\langle G_j,F_1F_2\rangle$. Since $u$ and $v$ (acting as time-dependent functions) have compact support in $[-T,T]$, so does $F_1F_2$. As a result, the inner product $\langle G_j,F_1F_2\rangle$ can be restricted on the interval $[-T,T]$, namely, we may assume that $G_j$ has the same compact support (with respect to time) as $F_1F_2$'s. Now, an application of (\ref{stri-4}) yields that the sum in (\ref{eq:3.4}) is bounded by a constant multiple of
\begin{equation*}
\begin{split}
&\sum_{j\geq0}2^{-j(\frac{1}{2}-)}\langle G_j,F_1F_2\rangle\\
&\lesssim\sum_{j,j_1,j_2\geq0}\Big(2^{-j(\frac{1}{2}-)}2^{-j_1(\frac{1}{2}+)}2^{-j_2(\frac{1}{2}+)}\\
&\quad\times
\big\||-i\partial_x|^{\frac{1}{4}}(\eta_{j_1}(\sigma_1)f_1)^\vee\big\|_{L^4_TL^4(\mathbb
R^2)}
\big\||-i\partial_x|^{\frac{1}{4}}(\eta_{j_2}(\sigma_2)f_2)^\vee\big\|_{L^4_TL^4(\mathbb
R^2)}
\|\eta_{j}(\sigma)g\|_{L^2(\mathbb R^3)}\Big)\\
&\lesssim\sum_{j,j_1,j_2\geq0}\Big(2^{-j(\frac{1}{2}-)}2^{-j_1[(\frac{1}{2}+)-\frac{1}{2}]}2^{-j_2[(\frac{1}{2}+)-\frac{1}{2}]}\\
&\quad\times \|\eta_{j_1}(\sigma_1)f_1\|_{L^2(\mathbb R^3)}
\|\eta_{j_2}(\sigma_2)f_2\|_{L^2(\mathbb R^3)}
\|\eta_{j}(\sigma)g\|_{L^2(\mathbb R^3)}\Big)\\
&\lesssim \|f_1\|_{L^2(\mathbb R^3)}\|f_2\|_{L^2(\mathbb R^3)}.
\end{split}
\end{equation*}

\medskip

$\circ$\quad{\it Low+Low$\rightarrow$Low}\quad
$|\xi_1|,|\xi_2|\lesssim\max\{15,|\alpha|\}$.\quad Via changing
variables and using the Cauchy-Schwarz inequality we can bound
(\ref{eq:3.2}) with
$$\int  K_{ll}
\left(\int
|f_1(\xi_1,\mu_1,\tau_1)f_2(\xi-\xi_1,\mu-\mu_1,\tau-\tau_1)|^2d\tau_1
d\xi_1 d\mu_1\right)^{\frac{1}{2}} g(\xi,\mu,\tau)d\xi d\mu d\tau,
$$
where
$$
K_{ll}=\frac{|\xi|}{<\sigma>^{\frac{1}{2}-}}\left(\int
\frac{d\tau_1d\xi_1d\mu_1}
{<\tau_1-\omega(\xi_1,\mu_1)>^{1+}<\tau-\tau_1-\omega(\xi-\xi_1,\mu-\mu_1)>^{1+}}\right)^{\frac{1}{2}}.
$$
We need only to control $K_{ll}$ using a constant independent of
$\xi,\mu,\tau$. By (\ref{2.1}) we have
$$
K_{ll}\lesssim\frac{|\xi|} {<\sigma>^{\frac{1}{2}-}}\left(\int
\frac{d\xi_1d\mu_1}
{<\tau-\omega(\xi,\mu)-\omega(\xi-\xi_1,\mu-\mu_1)>^{1+}}\right)^{\frac{1}{2}}.
$$
An elementary computation with the change of variables:
$$
\nu=\tau-\omega(\xi,\mu)-\omega(\xi-\xi_1,\mu-\mu_1)
$$
shows
$$
\Big|\frac{d\nu}{d\mu_1}\Big|\gtrsim|\xi|^{\frac{1}{2}}
|\sigma+\xi\xi_1(\xi-\xi_1)(5\xi^2-5\xi\xi_1+5\xi_1^2-3\alpha)-\nu|^{\frac{1}{2}}
$$
and consequently,
$$
K_{ll}\lesssim\frac{|\xi|^{\frac{3}{4}}}{<\sigma>^{\frac{1}{2}-}}\left(
\int \frac{d\xi_1
d\nu}{<\nu>^{1+}|\sigma+\xi\xi_1(\xi-\xi_1)(5\xi^2-5\xi\xi_1+5\xi_1^2-3\alpha)-\nu|^{\frac{1}{2}}}\right)^{\frac{1}{2}}.
$$
By (\ref{2.2}) we further get
$$
K_{ll}\lesssim\left(\int_{|\xi_1|\lesssim\max\{15,|\alpha|\}}
\frac{d\xi_1}{<\sigma+\xi\xi_1(\xi-\xi_1)
(5\xi^2-5\xi\xi_1+5\xi_1^2-3\alpha)>^{\frac{1}{2}}}\right)^{\frac{1}{2}}\lesssim
1.
$$

\medskip

\noindent$\bullet$\quad{\bf Situation 2 -- High Frequency}\quad
$|\xi_1+\xi_2|\gtrsim\max\{10,|\alpha|\}.$

\medskip

$\circ$\quad{\it High+Low$\rightarrow$High}\quad
$|\xi_2|\lesssim\max\{10,|\alpha|\}\lesssim|\xi|\sim|\xi_1|$.\quad As
above, we apply the Cauchy-Schwarz inequality to bound the integral
in (\ref{eq:3.2}) from above with a constant multiple of
$$\int_{} K_{hl}
\left(\int_{}|f_1(\xi_1,\mu_1,\tau_1)f_2(\xi-\xi_1,\mu-\mu_1,\tau-\tau_1)|^2d\tau_1d\xi_1d\mu_1\right)^{\frac{1}{2}}
g(\xi,\mu,\tau)d\xi d\mu d\tau
$$
where
$$
K_{hl}=\frac{|\xi|}{<\sigma>^{\frac{1}{2}-}} \left(\int_{}\frac{d\tau_1d\xi_1d\mu_1}
{<\tau_1-\omega(\xi_1,\mu_1)>^{1+}<\tau-\tau_1-\omega(\xi-\xi_1,\mu-\mu_1)>^{1+}}\right)^{\frac{1}{2}},
$$
but also we have the following estimate
$$
K_{hl}\lesssim\frac{|\xi|}
{<\sigma>^{\frac{1}{2}-}}\left(\int_{}\frac{d\xi_1d\mu_1}
{<\tau-\omega(\xi,\mu)-\omega(\xi-\xi_1,\mu-\mu_1)>^{1+}}\right)^{\frac{1}{2}}.
$$
Under the change of variables
$$
\kappa=\xi\xi_1(\xi-\xi_1)(5\xi^2-5\xi\xi_1+5\xi_1^2-3\alpha);\quad
\nu=\tau-\omega(\xi,\mu)-\omega(\xi-\xi_1,\mu-\mu_1)
$$
the Jacobian determinant $J$ enjoys
$$
J\lesssim\frac{|\kappa|^{\frac{1}{2}}}{|\xi|^{\frac{7}{2}}|\sigma+\kappa-\nu|^{\frac{1}{2}}(|\xi|^5-2|\kappa|)^{\frac{1}{2}}}.
$$
As a by-product of the last inequality and (\ref{2.2}), we obtain
\begin{eqnarray*}
K_{hl}&\lesssim&\frac{1}{|\xi|^{\frac{3}{4}}<\sigma>^{\frac{1}{2}-}}
\left(\int_{}\frac{|\kappa|^\frac12\, d\kappa d\nu}{|\sigma+\kappa-\nu|^{\frac{1}{2}}(|\xi|^5-2|\kappa|)^{\frac{1}{2}}<\nu>^{1+}}\right)^\frac{1}{2}\\
&\lesssim&\frac{1}{|\xi|^{\frac{3}{4}}<\sigma>^{\frac{1}{2}-}}
\left(\int_{}\frac{|\kappa|^\frac12\,
d\kappa}{<\sigma+\kappa>^{\frac{1}{2}}(|\xi|^5-2|\kappa|)^{\frac{1}{2}}}\right)^\frac{1}{2}.
\end{eqnarray*}
Since $|\xi-\xi_1|\lesssim\max\{10,|\alpha|\}$, we have
$|\kappa|\lesssim |\xi|^4$, whence getting
\begin{equation*}
K_{hl}\lesssim\frac{1}{|\xi|^2<\sigma>^{\frac{1}{2}-}}
\left(\int_{|\kappa|\lesssim|\xi|^4}\frac{d\kappa}{<\sigma+\kappa>^{\frac{1}{2}}}\right)^{\frac{1}{2}}\lesssim1.
\end{equation*}

\medskip

$\circ$\quad {\it High+High$\rightarrow$High}\quad
$|\xi_1|,|\xi_2|\gtrsim \max\{10,|\alpha|\}$.\quad Since
$|\xi_1|\geq|\xi_2|$, we have $|\xi_1|\gtrsim|\xi_1+\xi_2|$. Under
this circumstance, we will deal with two cases in the sequel.

\vspace{0.1cm}

$\diamond$\quad{\it Case (i)}\quad
$\max\{|\sigma|,|\sigma_2|\}\gtrsim |\xi_1|^2$.\quad Decomposing the
integral according to $|\xi_1|\sim 2^m$ where $m=1,2,\cdots$, we can
run the dyadic decomposition:
$$
|\sigma|\sim 2^j,\quad|\sigma_1|\sim 2^{j_1},\quad |\sigma_2|\sim
2^{j_2}\quad\hbox{for}\quad j,j_1,j_2=0,1,2,....
$$
If $|\sigma|\geq|\sigma_2|\geq|\xi_1|^2$, then an application of
(\ref{stri-4}) yields that the integral in (\ref{eq:3.2}) is bounded
from above by a constant multiple of
\begin{equation*}
\begin{split}
&\sum_{m\geq1}\sum_{j\geq2m}\sum_{j_1,j_2\geq0}\Big(2^{\frac{3m}{4}}
2^{-j(\frac{1}{2}-)}2^{-j_1(\frac{1}{2}+)}2^{-j_2(\frac{1}{2}+)}\|\eta_{j}(\sigma)g\|_{L^2(\mathbb R^3)}\\
&\quad\times\big\||-i\partial_x|^{\frac{1}{4}}\big(\eta_m(\xi_1)\eta_{j_1}(\sigma_1)f_1\big)^\vee\big\|_{L^4_TL^4(\mathbb
R^2)}
\big\||-i\partial_x|^{\frac{1}{4}}\big(\eta_{j_2}(\sigma_2)f_2\big)^\vee\big\|_{L^4_TL^4(\mathbb
R^2)}\Big)\\
&\lesssim \sum_{m\geq1}\sum_{j\geq
2m}\sum_{j_1,j_2\geq0}\Big(2^{-j(\frac{1}{2}-)}2^{\frac{3m}{4}}
2^{-j_1[(\frac{1}{2}+)-\frac{1}{2}]}2^{-j_2[(\frac{1}{2}+)-\frac{1}{2}]}\\
&\quad\times \|\eta_{j_1}(\sigma_1)f_1\|_{L^2(\mathbb
R^3)}\|\eta_{j_2}(\sigma_2)f_2\|_{L^2(\mathbb R^3)}
\|\eta_{j}(\sigma)g\|_{L^2(\mathbb R^3)}\Big)\\
&\lesssim \|f_1\|_{L^2(\mathbb R^3)}\|f_2\|_{L^2(\mathbb R^3)}.
\end{split}
\end{equation*}
If $|\sigma_2|\geq|\sigma|\geq|\xi_1|^2$, then a further use of
(\ref{stri-4}) derives that the integral in (\ref{eq:3.2}) is
bounded from above by a constant multiple of
\begin{equation*}
\begin{split}
&\sum_{m\geq1}\sum_{j_2\geq2m,j\geq0}\sum_{j_1\geq0}\Big(2^{\frac{m}{2}}
2^{-j(\frac{1}{2}-)}2^{-j_1(\frac{1}{2}+)}2^{-j(\frac{1}{2}+)}\|\eta_{j_2}(\sigma_2)f_2\|_{L^2(\mathbb R^3)}\\
&\quad\times\big\||-i\partial_x|^{\frac{1}{4}}(\eta_m(\xi_1)\eta_{j_1}(\sigma_1)f_1)^\vee\big\|_{L^4_TL^4(\mathbb
R^2)}\big\||-i\partial_x|^{\frac{1}{4}}(\eta_{j}(\sigma)g)^\vee\big\|_{L^4_TL^4(\mathbb R^2)}\Big)\\
&\lesssim
\sum_{m\geq1}\sum_{j_2\geq2m}\sum_{j_1,j_2\geq0}\Big(2^{-j_2(\frac{1}{4}+)}2^{\frac{m}{2}}
2^{-j_1[(\frac{1}{2}+)-\frac{1}{2}]}2^{-j[(\frac{3}{4}-)-\frac{1}{2}]}\\
&\quad\times \|\eta_{j_1}(\sigma_1)f_1\|_{L^2(\mathbb R^3)}
\|\eta_{j_2}(\sigma_2)f_2\|_{L^2(\mathbb R^3)}
\|\eta_{j}(\sigma)g\|_{L^2(\mathbb R^2)}\Big)\\
&\lesssim \|f_1\|_{L^2(\mathbb R^3)}\|f_2\|_{L^2(\mathbb R^3)}.
\end{split}
\end{equation*}

\vspace{0.1cm}

$\diamond$\quad{\it Case (ii)}\quad
$\max\{|\sigma|,|\sigma_2|\}\lesssim |\xi_1|^2$.\quad In this case,
we need to consider the size of the resonance function even more carefully.
This consideration will be done via splitting the estimate into two
pieces according to the size of resonance function.

\vspace{0.1cm}

$\triangleright$\quad{\it Subcase (i)}\quad
$\max\{|\sigma|,|\sigma_1|,|\sigma_2|\}\gtrsim|\xi_1|^4$.\quad This
means that the resonant interaction does not happen and consequently
$|\sigma_1|\gtrsim|\xi_1|^4$. The dyadic decomposition and
(\ref{stri-4}) are applied to deduce that the integral in
(\ref{eq:3.2}) is bounded from above by a constant multiple of

\begin{equation*}
\begin{split}
&\sum_{m\geq1}\sum_{j_1\geq4m}\sum_{2m\geq
j,j_2\geq0}\Big(2^{\frac34 m}
2^{-j(\frac{1}{2}-)}2^{-j_1(\frac{1}{2}+)}2^{-j_2(\frac{1}{2}+)}\|\eta_m(\xi_1)\eta_{j_1}(\sigma_1)f_1\|_{L^2(\mathbb R^3)}\\
&\quad\times\big\||-i\partial_x|^{\frac{1}{4}}(\eta_{j_2}(\sigma_2)f_2)^\vee\big\|_{L^4_TL^4(\mathbb
R^2)}
\big\||-i\partial_x|^{\frac{1}{4}}(\eta_{j}(\sigma)g)^\vee\big\|_{L^4_TL^4(\mathbb R^2)}\Big)\\
&\lesssim \sum_{m\geq1}\sum_{j_1\geq4m}\sum_{2m\geq
j,j_2\geq0}\Big(2^{\frac{3m}{4}}2^{-j_1(\frac{1}{4}+)}
2^{-j[(\frac{3}{4}-)-\frac{1}{2}]}2^{-j_2[(\frac{1}{2}+)-\frac{1}{2}]}\\
&\quad\times\|\eta_{j_1}(\sigma_1)f_1\|_{L^2(\mathbb R^3)}
\|\eta_{j_2}(\sigma_2)f_2\|_{L^2(\mathbb R^3)}
\|\eta_{j}(\sigma)g\|_{L^2(\mathbb R^3)}\Big)\\
&\lesssim \|f_1\|_{L^2(\mathbb R^3)}\|f_2\|_{L^2(\mathbb R^3)}.
\end{split}
\end{equation*}

\vspace{0.1cm}

$\triangleright$\quad{\it Subcase (ii)}\quad
$\max\{|\sigma|,|\sigma_1|,|\sigma_2|\}\lesssim|\xi_1|^4.$\quad This
means that the resonant interaction does happen. By the definition
of the resonant function we have
$$
\Big|\frac{\mu_1}{\xi_1}-\frac{\mu_2}{\xi_2}\Big|^2>
2^{-1}|\xi_1+\xi_2|^2|5(\xi_1^2+\xi_1\xi_2+\xi_2^2)-3\alpha|.
$$
Let
$$
\theta_1=\tau_1-\omega(\xi_1,\mu_1);\quad
\theta_2=\tau_2-\omega(\xi_2,\mu_2),
$$
and $A_{j,j_1,j_2}$ be the image of the following subset of $A^\ast$
$$
\big\{|\xi_1|\geq|\xi_2|\gtrsim \max\{10,|\alpha|\};\ |\sigma|\sim
2^{j},\ |\sigma_1|\sim 2^{j_1},\ |\sigma_2|\sim 2^{j_2};\
\max\{|\sigma|,|\sigma_1|,|\sigma_2|\}\lesssim|\xi_1|^4\}
$$
under the transformation:
$(\xi_1,\mu_1,\tau_1,\xi_2,\mu_2,\tau_2)\mapsto
(\xi_1,\mu_1,\theta_1,\xi_2,\mu_2,\theta_2)$. If in addition
$$
f_{j_1}=\eta_{j_1}(\sigma_1)f_1(\xi_1,\mu_1,\tau_1);\quad
f_{j_2}=\eta_{j_2}(\sigma_2)f_2(\xi_2,\mu_2,\tau_2),
$$
then the integral in (\ref{eq:3.2}) is controlled from above by a constant multiple of
\begin{equation}\label{eq:3.14a}
\begin{split}
&\sum_{j>0}\sum_{j_1,j_2\geq0}\Big(2^{-j(\frac{1}{2}-)}2^{-j_1(\frac{1}{2}+)}2^{-j_2(\frac{1}{2}+)}\\
&\quad\times\int_{A_{j,j_1,j_2}}\Big[|\xi|g\big(\xi,\mu,\theta_1+\omega(\xi_1,\mu_1)+\theta_2+\omega(\xi_2+\mu_2)\big)
\\
&\quad\quad\times\eta_j\big(\theta_1+\theta_2+\omega(\xi_1,\mu_2)+\omega(\xi_2+\mu_2)-
\omega(\xi_1+\xi_2,\mu_1+\mu_2)\big)
\\
&\quad\quad\times
f_{j_1}\big(\xi_1,\mu_1,\theta_1+\omega(\xi_1,\mu_1)\big)
f_{j_2}\big(\xi_2,\mu_2,\theta_2+\omega(\xi_2,\mu_2)\big)\Big]d\xi_1d\mu_1d\xi_2d\mu_2d\theta_1d\theta_2\Big).
\end{split}
\end{equation}
To get the desired estimate, we are led to dominate the following
sum for each fixed natural number $j$:
\begin{equation}\label{eq:3.14}
\begin{split}
&\sum_{j_1,j_2\geq0}\Big(2^{-j_1(\frac{1}{2}+)}2^{-j_2(\frac{1}{2}+)}\\
&\quad\times\int_{A_{j,j_1,j_2}}\Big[|\xi|g\big(\xi,\mu,\theta_1+\omega(\xi_1,\mu_1)+\theta_2+\omega(\xi_2+\mu_2)\big)
\\
&\quad\quad\times\eta_j\big(\theta_1+\theta_2+\omega(\xi_1,\mu_2)+\omega(\xi_2+\mu_2)-
\omega(\xi_1+\xi_2,\mu_1+\mu_2)\big)
\\
&\quad\quad\times f_{j_1}(\xi_1,\mu_1,\theta_1+\omega(\xi_1,\mu_1))
f_{j_2}(\xi_2,\mu_2,\theta_2+\omega(\xi_2,\mu_2))\Big]d\xi_1d\mu_1d\xi_2d\mu_2d\theta_1d\theta_2\Big).
\end{split}
\end{equation}
This will be accomplished via considering two more settings.

\vspace{0.1cm}

$\star$\quad{\it Subsubcase (i)}\quad
$$
\Big|5(\xi_1^4-\xi_2^4)-3\alpha(\xi_1^2-\xi_2^2)-
\Big[\Big(\frac{\mu_1}{\xi_1}\Big)^2-\Big(\frac{\mu_2}{\xi_2}\Big)^2\Big]\Big|>2^{j}.
$$
Under this circumstance, we change the variables
\begin{equation} \label{eq:3.5}
\left\{ \begin{aligned}
        &u=\xi_1+\xi_2 \\
        &v=\mu_1+\mu_2 \\
        &w=\theta_1+\omega(\xi_1,\mu_1)+\theta_2+\omega(\xi_2+\mu_2)\\
        &\mu_2=\mu_2,
        \end{aligned} \right.
\end{equation}
and then obtain its Jacobian determinant
\begin{equation}
\begin{split}
J_{\mu}&=\left|\begin{array}{cccc}
          1 & 1 & 0 & 0 \\
          \,& \,& \,& \,\\
          0 & 0 & 1 & 1 \\
          \,& \,& \,& \,\\
          5\xi^4_1-3\alpha\xi_1^2-\frac{\mu_1^2}{\xi_1^2} &
5\xi^4_2-3\alpha\xi_2^2-\frac{\mu_2^2}{\xi_2^2} & 2\frac{\mu_1}{\xi_1} & 2\frac{\mu_2}{\xi_2} \\
          \,& \,& \,& \,\\
          0 & 0 & 0 & 1
        \end{array}\right|\\
&=5(\xi_1^4-\xi_2^4)-3\alpha(\xi_1^2-\xi_2^2)-
\Big[\Big(\frac{\mu_1}{\xi_1}\Big)^2-\Big(\frac{\mu_2}{\xi_2}\Big)^2\Big].
\end{split}\end{equation}
Suppose now $A^{(1)}_{j,j_1,j_2}$ is the image of the subset of all
points $(\xi_1,\mu_1,\theta_1,\xi_2,\mu_2,\theta_2)\in
A_{j,j_1,j_2}$ obeying the just-assumed Subsubcase (i) condition
under the transformation (\ref{eq:3.5}). Then it is not hard to
deduce that $|J_\mu|\gtrsim 2^{j}$ and so that the sum in (\ref{eq:3.14}) is
\begin{equation}\label{eq:3.6}
\lesssim\sum_{j_1,j_2\geq0}2^{-j_1(\frac{1}{2}+)}2^{-j_2(\frac{1}{2}+)}\int_{A^{(1)}_{j,j_1,j_2}}
\frac{|u|g(u,v,w)}{|J_\mu|}H(u,v,w,\mu_2,\theta_1,\theta_2)dudvdwd\mu_2d\theta_1d\theta_2,
\end{equation}
where $H(u,v,w,\mu_2,\theta_1,\theta_2)$ is just
$\eta_jf_{j_1}f_{j_2}$ with respect to the transformation
(\ref{eq:3.5}). For the fixed variables:
$\theta_1,\theta_2,\xi_1,\xi_2,\mu_1$, we calculate the set length,
denoted by $\Delta_{\mu_2}$, where the free variable $\mu_2$ can
range. More precisely, if
$$f(\mu)=\theta_1+\theta_2-\frac{\xi_1\xi_2}{(\xi_1+\xi_2)}\left((\xi_1+\xi_2)^2
\Big[5(\xi_1^2+\xi_1\xi_2+\xi_2^2)-3\alpha\Big]-
\Big(\frac{\mu_1}{\xi_1}-\frac{\mu}{\xi_2}\Big)^2\right),
$$
then $|f'(\mu_2)|>|\xi_1|^2\gtrsim|u|^2$, and hence
$\Delta_{\mu_2}\lesssim 2^{j}|u|^{-2}$ follows from

\begin{eqnarray*}\label{eq:3.11}
&&|\theta_1+\theta_2+\omega(\xi_1,\mu_2)+\omega(\xi_2+\mu_2)-
\omega(\xi_1+\xi_2,\mu_1+\mu_2)|\\
&&=\left|\theta_1+\theta_2-\frac{\xi_1\xi_2}{(\xi_1+\xi_2)}\left((\xi_1+\xi_2)^2
\Big[5(\xi_1^2+\xi_1\xi_2+\xi_2^2)-3\alpha\Big]-
\Big(\frac{\mu_1}{\xi_1}-\frac{\mu_2}{\xi_2}\Big)^2\right)\right|\\
&&\sim 2^j.
\end{eqnarray*}
By the Cauchy-Schwarz inequality and the inverse change of variables
we have

\begin{eqnarray*}
&&\int_{A^{(1)}_{j,j_1,j_2}} |u|g(u,v,w)
|J_\mu|^{-1}H(u,v,w,\mu_2,\theta_1,\theta_2)dudvdwd\mu_2d\theta_1d\theta_2\\
&&\lesssim 2^{{\frac{j}{2}}}\int |u|g(u,v,w)
\left(\int |J_\mu|^{-2}H^2(u,v,w,\mu_2,\theta_1,\theta_2)d\mu_2\right)^{\frac{1}{2}}dudvdwd\theta_1d\theta_2\\
&&\lesssim 2^{{\frac{j}{2}}}\|g\|_{L^2(\mathbb R^3)}\int\left(\int |J_\mu|^{-1}H^2(u,v,w,\mu_2,\theta_1,\theta_2)dudvdwd\mu_2\right)^{\frac{1}{2}}d\theta_1d\theta_2\\
&&\lesssim\|g\|_{L^2(\mathbb R^3)}\int
\Big(\int\prod_{i=1,2}f^2_{j_i}(\xi_i,\mu_i,\theta_i+\omega(\xi_i,\mu_i))
d\xi_1d\mu_1d\xi_2d\mu_2\Big)^{\frac{1}{2}}d\theta_1d\theta_2\\
&&\lesssim 2^{\frac{j_1}{2}}2^{\frac{j_2}{2}}\|g\|_{L^2(\mathbb
R^3)}\|f_1\|_{L^2(\mathbb R^3)}\|f_2\|_{L^2(\mathbb R^3)}.
\end{eqnarray*}
It follows from (\ref{eq:3.14}) that the sum in
(\ref{eq:3.14a}) is $\lesssim\|f_1\|_{L^2(\mathbb
R^2)}\|f_2\|_{L^2(\mathbb R^2)}.$

\vspace{0.1cm}

$\star$\quad{\it Subsubcase (ii)}\quad
$$
\Big|5(\xi_1^4-\xi_2^4)-3\alpha(\xi_1^2-\xi_2^2)-
\Big[\Big(\frac{\mu_1}{\xi_1}\Big)^2-\Big(\frac{\mu_2}{\xi_2}\Big)^2\Big]\Big|\leq
2^{j}.
$$
In this setting, the change of variables taken in Subsubcase (i)
does not work because the determinant of the Jacobian may be zero.
So, we cannot help finding a new change of variables. Before doing
this, we notice that the size $|\xi_1|\sim 2^m$ (for $m\geq 0$) can
be used but also the integral in (\ref{eq:3.2}) may be rewritten as

\begin{equation}\label{eq:3.16}
\begin{split}
&\sum_{j_1,m\geq0}\sum_{2m>j,j_2\geq0}\Big(2^{-j(\frac{1}{2}-)}2^{-j_1(\frac{1}{2}+)}2^{-j_2(\frac{1}{2}+)}2^{m}\\
&\quad\times\int_{A_{j,j_1,j_2}}\Big[
g(\xi,\mu,\theta_1+\omega(\xi_1,\mu_1)+\theta_2+\omega(\xi_2+\mu_2))
\\
&\quad\quad\times\eta_j\big(\theta_1+\theta_2+\omega(\xi_1,\mu_2)+\omega(\xi_2+\mu_2)-
\omega(\xi_1+\xi_2,\mu_1+\mu_2)\big)
\\
&\quad\quad\times
f_{m,j_1}\big(\xi_1,\mu_1,\theta_1+\omega(\xi_1,\mu_1)\big)
f_{j_2}\big(\xi_2,\mu_2,\theta_2+\omega(\xi_2,\mu_2)\big)\Big]d\xi_1d\mu_1d\xi_2d\mu_2d\theta_1d\theta_2\Big),
\end{split}
\end{equation}
where
$f_{m,j_1}=\eta_m(\xi_1)\eta_{j_1}(\sigma_1)f_1(\xi_1,\mu_1,\tau_1).$
Now, we choose the following transformation:

\begin{equation} \label{eq:3.7}
\left\{ \begin{aligned}
        &u=\xi_1+\xi_2 \\
        &v=\mu_1+\mu_2 \\
        &w=\theta_1+\omega(\xi_1,\mu_1)+\theta_2+\omega(\xi_2+\mu_2)\\
        &\xi_1=\xi_1,
        \end{aligned} \right.
\end{equation}
and moreover assume that $A^{(2)}_{j,j_1,j_2}$ is the image under
(\ref{eq:3.7}) of the set of those points
$(\xi_1,\mu_1,\theta_1,\xi_2,\mu_2,\theta_2)\in A_{j,j_1,j_2}$
satisfying the just-given Subsubcase (ii) condition. A calculation
yields that the associated Jacobian determinant of the last
transformation (\ref{eq:3.7}) is
\begin{equation}
\begin{split}
J_{\xi}&=\left|\begin{array}{cccc}
          1 & 1 & 0 & 0 \\
          \,& \,& \,& \,\\
          0 & 0 & 1 & 1 \\
          \,& \,& \,& \,\\
          5\xi^4_1-3\alpha\xi_1^2-\frac{\mu_1^2}{\xi_1^2} &
5\xi^4_2-3\alpha\xi_2^2-\frac{\mu_2^2}{\xi_2^2} & 2\frac{\mu_1}{\xi_1} & 2\frac{\mu_2}{\xi_2} \\
          \,& \,& \,& \,\\
          1 & 0 & 0 & 0
        \end{array}\right|\\
&=2\Big(\frac{\mu_1}{\xi_1}-\frac{\mu_2}{\xi_2}\Big).
\end{split}\end{equation}
From this formula it follows that $|J_\xi|\gtrsim|\xi_1|.$ Next, we
fix $\theta_1,\theta_2,\xi_2,\mu_1,\mu_2$, and estimate the interval
length $\Delta_{\xi_1}$ of the free variable $\xi_1$. Putting
\begin{equation}\label{eq:3.18}h(\xi)=5(\xi^4-\xi_2^4)-3\alpha(\xi^2-\xi_2^2)-\Big[\Big(\frac{\mu_1}{\xi}\Big)^2-
\Big(\frac{\mu_2}{\xi_2}\Big)^2\Big],
\end{equation}
we compute
\begin{equation}\label{eq:3.17}
h'(\xi)=20\xi^3-6\alpha\xi+2(\mu_1/\xi)^2\xi^{-1}.
\end{equation}
Since now $h'(\xi_1)$ has the same sign as $\xi_1$'s, we conclude
$|h'(\xi_1)|\gtrsim|\xi_1|^3$, thereby finding
$\Delta_{\xi_1}\lesssim 2^{j-3m}.$ Consequently, the sum in
(\ref{eq:3.16}) is
\begin{equation}\label{eq:3.8}
\lesssim\sum_{j_1,m\geq0}\sum_{2m>j,j_2\geq0}2^{j(-\frac{1}{2}+)}2^{m}\int_{A^{(2)}_{j,j_1,j_2}}
\frac{g(u,v,w)}{|J_\xi|}H(u,v,w,\xi_1,\theta_1,\theta_2)dudvdwd\xi_1d\theta_1d\theta_2,
\end{equation}
where $H(u,v,w,\xi_1,\theta_1,\theta_2)$ equals
$\eta_jf_{m,j_1}f_{j_1}$ under the change of variables
(\ref{eq:3.7}). Note that by the Cauchy-Schwarz inequality
\begin{eqnarray*}
&&\int_{A^{(2)}_{j,j_1,j_2}}\frac{g(u,v,w)}{
|J_\xi|}H(u,v,w,\xi_1,\theta_1,\theta_2)dudvdwd\xi_1d\theta_1d\theta_2\\
&&\lesssim 2^{-\frac{3}{2}m}2^{\frac{j}{2}}\int g(u,v,w)
\left(\int|J_\xi|^{-2}H^2(u,v,w,\xi_1,\theta_1,\theta_2)d\xi_1\right)^{\frac{1}{2}}dudvdwd\theta_1d\theta_2\\
&&\lesssim 2^{-\frac{3}{2}m}2^{\frac{j}{2}}\|g\|_{L^2(\mathbb
R^3)}\int
\left(\int |J_\xi|^{-2}H^2(u,v,w,\xi_1,\theta_1,\theta_2)dudvdwd\xi_1\right)^{\frac{1}{2}}d\theta_1d\theta_2\\
&&\lesssim 2^{-2m}2^{\frac{j}{2}}\|g\|_{L^2(\mathbb R^3)}\int
\left(\int |J_\xi|^{-1}H^2(u,v,w,\xi_1,\theta_1,\theta_2)dudvdwd\xi_1\right)^{\frac{1}{2}}d\theta_1d\theta_2\\
&&\lesssim 2^{-2m}2^{\frac{j}{2}}\|g\|_{L^2(\mathbb R^3)}\int
\Big(\int\prod_{l=1,2}f^2_{l}(\xi_l,\mu_l,\theta_l+\omega(\xi_l,\mu_l))
d\xi_ld\mu_l\Big)^{\frac{1}{2}}d\theta_1d\theta_2\\
&&\lesssim
2^{-2m}2^{\frac{j}{2}}2^{\frac{j_1}{2}}2^{{\frac{j_2}{2}}}\|g\|_{L^2(\mathbb
R^3)}\|f_1\|_{L^2(\mathbb R^3)}\|f_2\|_{L^2(\mathbb R^3)}.
\end{eqnarray*}
Thus the sum in (\ref{eq:3.16}) is
\begin{eqnarray*}
&&\lesssim
\sum_{m,j_1\geq0}\sum_{2m>j_2,j\geq0}\Big(2^{-j(\frac{1}{2}-)}2^{-m}2^{\frac{j}{2}}
2^{-j_1((\frac{1}{2}+)-\frac{1}{2})}2^{j_2((\frac{1}{2}+)-\frac{1}{2})}\\
&&\quad\times\|g\|_{L^2(\mathbb R^3)}
\|f_1\|_{L^2(\mathbb R^3)}\|f_2\|_{L^2(\mathbb R^3)}\Big)\\
&&\lesssim\|f_1\|_{L^2(\mathbb R^3)}\|f_2\|_{L^2(\mathbb R^3)}.
\end{eqnarray*}
\end{proof}

\section{Proof of Theorem \ref{mainth}}

\noindent$\bullet$\quad{\bf Local well-posedness.}\quad Consider the
integral equation associated with (\ref{5KP})
\begin{eqnarray}\label{eq:4.1}
u(t)=\psi(t)S(t)\phi-\frac{\psi_T(t)}{2}
\int_0^tS(t-t')\partial_x\big(u^2(t')\big)dt',
\end{eqnarray}
where $0<T<1$, and $\psi_T(t)$ is the bump function defined in
Section 2. It is clear that a solution to (\ref{eq:4.1}) is a fixed
point of the nonlinear operator
\begin{equation}\label{eq:4.2}
L(u)=\psi(t)S(t)\phi-\frac{\psi_T(t)}{2}\int_0^t
S(t-t')\partial_x\big(u^2(t')\big)dt'.
\end{equation}
Therefore we are required to verify that $L$ is a contractive
mapping from the following closed set to itself
\begin{eqnarray}
B_a=\Big\{u\in X^{s_1,s_2}_b:\quad\|u\|_{X^{s_1,s_2}_b}\leq
a=4c\|\phi\|_{H^{s_1,s_2}(\mathbb R^2)},\quad 2^{-1}<b\Big\}.
\end{eqnarray}
Here and hereafter $c>0$ is a time-free constant and may vary from one line to the other. By Proposition \ref{prop1} and
Theorem \ref{biestimate}, there exists $\sigma
>0$ such that
\begin{eqnarray}\label{eq:4.5}
\|L(u)\|_{X^{s_1,s_2}_b}\leq c\|\phi\|_{H^{s_1,s_2}(\mathbb
R^2)}+cT^\sigma\|u\|_{X^{s_1,s_2}_b}^2.
\end{eqnarray}
Next, since
$\partial_x(u^2)-\partial_x(v^2)=\partial_x[(u-v)(u+v)]$, we
similarly get
\begin{eqnarray}\label{eq:4.3}
\|L(u)-L(v)\|_{X^{s_1,s_2}_b}&\leq &
cT^\sigma\|u-v\|_{X^{s_1,s_2}_b}
\Big(\|u\|_{X^{s_1,s_2}_b}+\|v\|_{X^{s_1,s_2}_b}\Big).
\end{eqnarray}
Choosing $T=T(\|\phi\|_{H^{s_1,s_2}(\mathbb R^2)})$ such that
$8cT^\sigma\|\phi\|_{H^{s_1,s_2}(\mathbb R^2)}<1$, we deduce from
(\ref{eq:4.5}) and (\ref{eq:4.3}) that $L$ is strictly contractive
on the ball $B_a$. Thus there exists a unique solution $u\in
X^{s_1,s_2}_b([-T,T])\subseteq C\big([-T,T];H^{s_1,s_2}(\mathbb
R^2)\big)$ (thanks to $b>1/2$) to the IVP of the fifth order KP-I
equation. The smoothness of the mapping from $H^{s_1,s_2}(\mathbb
R^2)$ to $X^{s_1,s_2}_b([-T,T])$ follows from the fixed point
argument. Because the dispersive function $\omega(\xi,\mu)$ is
singular at $\xi=0$, the requirement
$|\xi|^{-1}\hat{\phi}\in\mathcal{S}'(\mathbb R^2)$ is necessary in
order to have a well defined time derivative of $S(t)\phi$. So, the
argument for the local well-posedness is complete.

\medskip

\noindent$\bullet$\quad{\bf Global well-posedness.}\quad We first
handle the global well-posedness of (\ref{5KP}) in the anisotropic
Sobolev space $H^{s_1,0}(\mathbb R^2)$ with $s_1\geq 0$. Suppose
$\phi\in H^{s_1,0}(\mathbb R^2)$. Then by local well-posedness there
exists a unique solution $u\in C\big([-T,T];H^{s_1,0}(\mathbb
R^2)\big)$ of (\ref{5KP}). We claim that there exists $T$, depending
on $\|\phi\|_{L^2(\mathbb R^2)}$, such that on the interval $[-T,T]$
one has
\begin{equation}\label{eq:4.7}\sup_{|t|\leq
T}\|u(t)\|_{H^{s_1,0}(\mathbb R^2)}\leq c\|\phi\|_{H^{s_1,0}(\mathbb
R^2)}.
\end{equation}
With the help of this
claim and the local well-posedness part of Theorem \ref{mainth} with $u(T)$ and $u(-T)$ being initial values, we can extend the exit time to the positive infinity step by step in that
the exist time $T'$ depends only on
$$
\|u(T)\|_{L^2(\mathbb R^2)}=\|u(-T)\|_{L^2(\mathbb
R^2)}=\|\phi\|_{L^2(\mathbb R^2)}
$$
and
$$
\max\big\{\|u(T)\|_{H^{s_1,0}(\mathbb R^2)},\ \|u(-T)\|_{H^{s_1,0}(\mathbb
R^2)}\big\}\leq c\|\phi\|_{H^{s_1,0}(\mathbb R^2)}.
$$

To check the claim, let $J^{s_1}_x=(I-\partial_x^2)^{s_1/2}$. Then
from the definitions of the anisotropic Sobolev space and the Bourgain
space it follows that
$$
\|J^{s_1}_xu\|_{L^2(\mathbb R^2)}=\|u\|_{H^{s_1,0}(\mathbb R^2)}
\quad\hbox{and}\quad
\|J^{s_1}_xu\|_{X^{0,0}_b}=\|u\|_{X^{s_1,0}_b}.
$$
Letting $J^{s_1}_x$ act on both sides of the integral equation
(\ref{eq:4.1}), we derive
\begin{eqnarray}\label{eq:4.6}
J^{s_1}_xu(t)=\psi(t)S(t)J^{s_1}_x\phi-\frac{
\psi_T(t)}{2}\int_0^tS(t-t')J^{s_1}_x\partial_x\big(u^2(t')\big)dt'.
\end{eqnarray}
By Proposition \ref{prop1}, we have
\begin{equation}\label{Jhomo}
\|\psi S(t)J^{s_1}_x\phi\|_{X^{0,0}_b}\leq
c\|\phi\|_{H^{s_1,0}(\mathbb R^2)},
\end{equation}
as well as
\begin{equation}\label{Jinhomo}
\Big\|\psi_T(t)
\int_0^tS(t-t')J^{s_1}_x\partial_x\big(u^2(t')\big)\,dt'\Big\|_{X^{0,0}_b}\leq
cT^{1-b+b'}\|J^{s_1}_x\partial_x\big(u^2(t')\big)\|_{X^{0,0}_{b'}}.
\end{equation}
A slight modification of the argument for the bilinear estimates
carried out in Section 3 can produce the following bilinear estimate
\begin{equation}\label{Jbilinear}\|J^{s_1}_x\partial_x(u^2)\|_{X^{0,0}_{-\frac{1}{2}+}}\le c
\|u\|_{X^{0,0}_{\frac{1}{2}+}}\|J^{s_1}_xu\|_{X^{0,0}_{\frac{1}{2}+}}.
\end{equation}
Combining (\ref{Jhomo}), (\ref{Jinhomo}) and (\ref{Jbilinear}), we
get
$$\|J^{s_1}_xu\|_{X^{0,0}_b}\leq c\|\phi\|_{H^{s_1,0}}+cT^\sigma\|u\|_{X^{0,0}_b}\|J^{s_1}_xu\|_{X^{0,0}_b}.$$
By (\ref{eq:4.5}) with $s_1=s_2=0$, we can choose
$T=T(\|\phi\|_{L^2(\mathbb R^2)})$ such that
$cT^\sigma\|u\|_{X^{0,0}_b}<\frac{1}{2}.$ Thus by (\ref{Jbilinear}),
we have
$$\|J^{s_1}_x\partial_x(u^2)\|_{X^{0,0}_{b}}\leq c\|\phi\|_{H^{s_1,0}(\mathbb R^2)}+2^{-1}\|J^{s_1}_xu\|_{X^{0,0}_b}.$$
Since $b>\frac{1}{2}$, we obtain the fundamental embedding inequality
$$
\sup_{|t|\leq T}\|u(t)\|_{H^{s_1,0}(\mathbb R^2)}\leq
\|J^{s_1}_xu\|_{X^{0,0}_b}\leq c\|\phi\|_{H^{s_1,0}(\mathbb R^2)},
$$
as well as (\ref{eq:4.7}) which verifies the claim.

Similarly, the operator $J^{s_2}_y=(I-\partial_y^2)^{s_2/2}$ can act on both side of the
integral equation (\ref{eq:4.1}). As a result, we get
\begin{equation}\label{Jbilinear1}\|J^{s_2}_y\partial_x(u^2)\|_{X^{s_1,0}_{-\frac{1}{2}+}}\le c
\|u\|_{X^{s_1,0}_{\frac{1}{2}+}}\|J^{s_2}_yu\|_{X^{s_1,0}_{\frac{1}{2}+}},
\end{equation}
thereby obtaining the following estimate
$$
\|J^{s_2}_y\partial_x(u^2)\|_{X^{s_1,0}_{-\frac{1}{2}+}}\leq
c\|\phi\|_{H^{s_1,s_2}(\mathbb R^2)}+
cT^\sigma\|u\|_{X^{s_1,0}_b}\|J^{s_2}_yu\|_{X^{s_1,0}_b}.
$$
By (\ref{eq:4.7}), we can also choose a time $T$ so that it depends
on $\|\phi\|_{H^{s_1,0}(\mathbb R^2)}$ and obeys
$cT^\sigma\|u\|_{X^{s_1,0}}<\frac{1}{2}.$ Finally, we arrive at
$$
\sup_{|t|\le T}\|u(t)\|_{H^{s_1,s_2}(\mathbb R^2)}=\sup_{|t|\leq
T}\|J^{s_2}_yu(t)\|_{H^{s_1,0}(\mathbb R^2)}\leq
\|J^{s_2}_yu\|_{X^{s_1,s_2}_b}\leq c\|\phi\|_{H^{s_1,s_2}(\mathbb
R^2)}.
$$
Note that the previous constant $c>0$ is time-free. So, as before we
can extend the exist time to infinity step by step, and therefore
finish the proof of the global well-posedness.

\vspace{0.5cm}

\hspace{5mm}
\begin{center}
\end{center}

\end{document}